\algrenewcommand\algorithmicrequire{\textbf{Input:}}
\algrenewcommand\algorithmicensure{\textbf{Output:}}
\numberwithin{algorithm}{section}
\theoremstyle{thmstyleone}%
\newtheorem{theorem}{Theorem}[section]
\newtheorem{proposition}[theorem]{Proposition}%
\newtheorem{assumption}[theorem]{Assumption}%
\newtheorem{lemma}[theorem]{Lemma}
\theoremstyle{thmstyletwo}%
\newtheorem{remark}[theorem]{Remark}%
\theoremstyle{thmstylethree}%
\begin{document}

\title[Article Title]{Entropic Optimal Transport Problem with Convex Functional Cost}


\author[1]{\fnm{Anna} \sur{Kazeykina}}\email{anna.kazeykina@math.u-psud.fr}
\author[2]{\fnm{Zhenjie} \sur{Ren}}\email{zhenjie.ren@univ-evry.fr}
\author[3]{\fnm{Xiaozhen} \sur{Wang}}\email{xiaozhen.wang@dauphine.psl.eu}
\author[4]{\fnm{Yufei} \sur{Zhang}}\email{yufei.zhang@imperial.ac.uk}

\affil[1]{\orgdiv{LMO}, \orgname{Université Paris-Saclay}, \orgaddress{\country{France}}}
\affil[2]{\orgdiv{LaMME}, \orgname{Université Evry Paris-Saclay}, \orgaddress{\country{France}}}
\affil[3]{\orgdiv{CEREMADE}, \orgname{Université Paris Dauphine-PSL}, \orgaddress{\country{France}}}
\affil[4]{\orgdiv{Department of Mathematics}, \orgname{Imperial College London}, \orgaddress{\country{United Kingdom}}}










\abstract{We study an entropic optimal transport problem in which the transport plan is penalized by a nonlinear convex functional acting  on the coupling. We establish existence, uniqueness, and uniform a priori bounds for minimizers, and we show that each minimizer satisfies a fixed-point first-order optimality system associated with an exponentially tilted reference measure.
Building on this variational structure, we introduce the Sinkhorn–Frank–Wolfe (SFW) flow, prove its global well-posedness, and derive an energy–dissipation inequality yielding exponential convergence toward the unique optimal transport plan. As an application, we implement the SFW algorithm to solve an optimal routing problem for unmanned aerial vehicles with congestion aversion. }

\keywords{Entropic optimal transport, Sinkhorn-Frank-Wolfe Algorithm, Convex functional cost, Energy dissipation}


\pacs[MSC Classification]{49M29, 49N15, 90B06}

\maketitle

\section{Introduction}\label{sec1}


Optimal transport (OT) provides a variational method to compare probability measures by minimizing a transportation cost subject to marginal constraints \cite{Villani2009,santambrogio2015optimal,peyre2019}. Since the Kantorovich formulation, OT has become a central tool in analysis, probability, and optimization, with applications ranging from geometric data analysis and imaging \cite{peyre2019,feydy2017optimal,sarlin2020superglue} to financial calibration via martingale couplings \cite{henry2019martingale,chen2024EMOT} and traffic modeling \cite{Jimenez2008,Ekeland2007}. The dynamic formulation of Benamou and Brenier interprets transport as a time-dependent flow satisfying a continuity equation and links OT to PDE and control \cite{benamou-brenier}.

Entropic regularization introduces a relative-entropy penalty into the OT objective, leading to the entropic optimal transport (EOT) problem and connecting OT with Schr\"odinger bridge formulations \cite{leonard2014survey}. This modification yields a strictly convex objective and enables efficient numerical solutions via Sinkhorn’s scaling algorithm \cite{cuturi2013sinkhorn}, stabilized variants for small regularization \cite{schmitzer2019stabilized}, and extensions such as Greenkhorn and multi-marginal scaling \cite{altschuler2017near,chizat2018scaling}. From an analytic standpoint, recent works have clarified the convergence of entropic potentials to Kantorovich potentials \cite{nutzWiesel2021potentials}, established quantitative stability of entropic couplings \cite{ghosalNutz2022stability}, and derived sharp differentiation formulas for the entropic cost \cite{conforti2021timeDerivative}.

We work on two Polish spaces $(\mathcal X,\mathcal B(\mathcal X))$ and 
$(\mathcal Y,\mathcal B(\mathcal Y))$, endowed with their Borel $\sigma$–algebras. Given probability measures $\mu\in\mathcal P(\mathcal X)$ and $\nu\in\mathcal P(\mathcal Y)$, the set of admissible couplings with prescribed marginals is
\begin{equation}
\label{eq:marginal-coupling}
\begin{aligned}
\Pi(\mu,\nu) := \Bigl\{\pi\in\mathcal P(\mathcal X\times\mathcal Y):
\pi(A\times\mathcal Y)=\mu(A), &  \pi(\mathcal X\times B)=\nu(B),\\
&\forall A\in\mathcal B(\mathcal X),\,B\in\mathcal B(\mathcal Y)\Bigr\}.
\end{aligned}
\end{equation}

We adopt the reference measure viewpoint for entropic transport. Fix a probability measure $R\in\mathcal P(\mathcal X\times\mathcal Y)$ with $R\ll\mu\otimes\nu$, and consider the static Schr\"odinger problem
\begin{equation}\label{eq:SB}
    \inf_{\pi\in\Pi(\mu,\nu)} H(\pi\,\|\,R).
\end{equation}
where $H(\cdot\,\|\,\cdot)$ denotes the relative entropy. This formulation encompasses the classical entropy–regularized optimal transport model: whenever the reference measure $R$ takes the exponentially tilted product form $dR \propto e^{-c/\varepsilon}\,d(\mu\otimes\nu)$ for some measurable cost $c$, problem~\eqref{eq:SB} is equivalent to the standard entropic OT with cost $c$.

In the classical formulation, the objective in \eqref{eq:SB} depends linearly on the coupling $\pi$, and the entropy acts solely as a convex regularization with respect to the fixed reference measure $R$. In many modelling frameworks, however, the effective cost of transporting mass may itself depend on the distribution $\pi$, for instance through interaction, congestion, capacity, or sparsity effects, so that the resulting cost is no longer purely linear in $\pi$ but exhibits a convex structure (see \cite{Paty2020Regularized, Mishra2021Manifold, Liu2023SparsityConstrained, Courty2017Optimal, Asadulaev2024Neural, Gallardo2024Congestion}). To accommodate such effects, we augment the entropic objective with an additional convex functional $F:\mathcal P(\mathcal X\times\mathcal Y)\to\mathbb R$ acting on the coupling measure, and study the resulting entropic optimal transport problem with convex functional cost.

The resulting variational formulation reads
\begin{equation}\label{eq:EOT-congestion}
    \inf_{\pi\in\Pi(\mu,\nu)} V(\pi) = \inf_{\pi\in\Pi(\mu,\nu)} 
    \bigl\{\, H(\pi\,\|\,R)\;+\;F(\pi)\,\bigr\}.
\end{equation}

This extension retains the entropic structure while allowing the effective cost to vary with $\pi$, thereby covering a broad class of regularized transport models arising in mean-field, interaction, and statistical-mechanics formulations (\cite{Mikami2009Optimal, Baudelet2024Deep}).

In this paper, our main contribution is to extend entropic optimal transport by incorporating a general convex functional $F$ acting on the coupling, leading to an entropic optimal transport problem with convex functional  cost, and to develop an algorithmic framework for solving it. The central idea is a Sinkhorn-Frank-Wolfe (SFW) flow: at each iteration, an \emph{inner loop} performs a single Sinkhorn scaling to solve the entropy-plus-linearized subproblem, while an \emph{outer loop} carries out a Frank--Wolfe step in the space of couplings (\cite{Jaggi2013Revisiting}). This construction preserves the computational scalability of Sinkhorn iterations, while the convexity of $F$ is handled through the outer linearization.

From an analytic perspective, the entropy–regularized functional admits a unique minimizer, and the SFW flow satisfies a quantitative energy dissipation inequality implying exponential convergence. The SFW flow inherits this contraction structure, yielding convergence of the iterates under natural compactness assumptions.

Our paper is structured as follows. In Section~\ref{sec:main_result}, we introduce the entropic optimal transport problem with convex functional cost, derive the first-order optimality system, and establish our main analytic results, including existence and uniqueness of minimizers, uniform a priori bounds, as well as global well-posedness and exponential convergence of the Sinkhorn-Frank-Wolfe flow. Section~\ref{sec:UAV} presents a concrete instance of the framework in the setting of unmanned aerial vehicle (UAV) relocation, where we specify the reference kernel, transport cost, and a congestion-type convex functional, implement the corresponding SFW algorithm, and report numerical experiments comparing the method with classical entropic transport baselines. All proofs of the main results, together with auxiliary estimates and technical lemmas, are collected in Section~\ref{sec:main_result_proofs}.

\subsection{Notation}

\textbf{State spaces: } Let $(\mathcal X,d_{\mathcal X})$ and $(\mathcal Y,d_{\mathcal Y})$ be Polish spaces. We write $\mathcal P(\mathcal Z)$ for the set of Borel probability measures on a Polish space $\mathcal Z$, and $\mu\otimes\nu$ for the product measure of $\mu\in\mathcal P(\mathcal X)$ and $\nu\in\mathcal P(\mathcal Y)$. The support of a measure $\rho$ is denoted by $\mathrm{supp}(\rho)$.

\noindent \textbf{Moment spaces and Wasserstein distance: } Given $p\in[1,\infty)$, we denote by $\mathcal P_p(\mathcal Z)$ the subset of measures
$\mu\in\mathcal P(\mathcal Z)$ with finite $p$--th moment; i.e.,
\[
\int d_{\mathcal Z}(z,\hat z)^p\,\mu(dz)<\infty
\quad\text{for some (and hence all) }\hat z\in\mathcal Z.
\]
We denote by $\mathcal P_\infty(\mathcal Z)$ the set of probability measures with bounded support. For $\rho,\eta\in\mathcal P_p(\mathcal Z)$ and $p\in[1,\infty)$, the $p$--Wasserstein distance is
\[
W_p(\rho,\eta)^p
:=\inf_{\pi\in\Pi(\rho,\eta)}
\int_{\mathcal Z\times\mathcal Z}
d_{\mathcal Z}(z,z')^p\,\pi(dz,dz').
\]
\textbf{Relative entropy: } For $\pi,\rho\in\mathcal P(\mathcal Z)$, the relative entropy of $\pi$ with respect to $\rho$ is
\[
H(\pi\,\|\,\rho)
=
\begin{cases}
\displaystyle
\int_{\mathcal Z}
\log\!\bigl(\tfrac{d\pi}{d\rho}\bigr)\,d\pi,
&\pi\ll\rho,\\[1ex]
+\infty,&\text{otherwise}.
\end{cases}
\]
It is always nonnegative and vanishes if and only if $\pi=\rho$.

\section{Main Results}
\label{sec:main_result}
In this section we present the main theoretical results. The analysis concerns the generalized entropic transport problem \ref{eq:EOT-congestion}:
\[
\inf_{\pi\in\Pi(\mu,\nu)}  V(\pi) = \inf_{\pi\in\Pi(\mu,\nu)} \bigl\{\,H(\pi\,\|\,R)\;+\;F(\pi)\,\bigr\},
\]
introduced in the previous section.  We begin by formulating the basic assumptions on the state spaces. These conditions ensure the weak compactness of $\Pi(\mu,\nu)$ and the finiteness of entropy and Wasserstein distances, hence providing a well-defined functional framework.
\begin{assumption}
\label{ass:bounded_domain}
The marginal spaces $\mathcal{X}$ and $\mathcal{Y}$ are bounded Polish spaces, 
that is, their diameters are finite:
\[
\mathrm{diam}(\mathcal{X}) := \sup_{x,x'\in\mathcal{X}} d_{\mathcal{X}}(x,x') < \infty,
\qquad
\mathrm{diam}(\mathcal{Y}) := \sup_{y,y'\in\mathcal{Y}} d_{\mathcal{Y}}(y,y') < \infty.
\]
Consequently, the product space $\mathcal{X}\times\mathcal{Y}$ 
equipped with the product metric also has finite diameter.
\end{assumption}
\begin{remark}
Under Assumption~\ref{ass:bounded_domain}, the supports of all admissible probability measures are compact. In particular, every marginal measure $\mu$ (resp.~$\nu$) supported on $\mathcal{X}$ (resp.~$\mathcal{Y}$) belongs to $\mathcal{P}_\infty(\mathcal{X})$ (resp.~$\mathcal{P}_\infty(\mathcal{Y})$), since all moments of order $q\ge1$ are finite on a bounded metric space. 
Consequently, for any coupling $\pi\in\Pi(\mu,\nu)$, with $ \pi \ll R$, the entropic term $H(\pi\|R)$ is well defined and finite.
\end{remark}

Having established the geometric conditions on $(\mathcal X,\mathcal Y)$ under Assumption~\ref{ass:bounded_domain}, we now turn to the analytic structure of the congestion functional $F$. The next assumption collects the convexity, differentiability, and quantitative regularity properties that will be required for the variational analysis of the entropic problem.

\begin{assumption}
\label{Ass:F_full}
Let $F:\mathcal{P}(\mathcal{X}\times\mathcal{Y})\to\mathbb{R}$ be a functional satisfying the following properties:
\begin{enumerate}[(i)]
\item Convexity and lower boundedness: $F$ is convex and non–negative, i.e.\ $F(\pi)\ge0$ for all $\pi\in\mathcal{P}(\mathcal{X}\times\mathcal{Y})$.
\item Differentiability and bounded oscillation: There exists a continuous map, called the \emph{functional linear derivative}, $\frac{\delta F}{\delta \pi}:
\mathcal{P}(\mathcal{X}\times\mathcal{Y})\times(\mathcal{X}\times\mathcal{Y})
\longrightarrow\;\mathbb{R}$,
such that for all $\pi,\pi'\in\mathcal{P}(\mathcal{X}\times\mathcal{Y})$,
\begin{equation*}
F(\pi)-F(\pi')
= \int_0^1 \int_{\mathcal{X}\times\mathcal{Y}}
  \tfrac{\delta F}{\delta \pi}(\pi^\eta,x,y)\,
  d(\pi-\pi')(x,y)\,d\eta,
\end{equation*}
where $\pi^\eta=(1-\eta)\pi'+\eta\pi$. Moreover, there exists a constant $C_{\mathrm{osc}}>0$ such that
\[
\sup_{\pi\in\mathcal P(\mathcal X\times\mathcal Y)}
\operatorname{osc}\!\Big(\tfrac{\delta F}{\delta \pi}(\pi,\cdot,\cdot)\Big)
\le C_{\mathrm{osc}},
\]
where for a bounded function $\varphi$ on $\mathcal X\times\mathcal Y$ we denote
$\operatorname{osc}(\varphi)
:= \sup_{(x,y)} \varphi(x,y)
   - \inf_{(x,y)} \varphi(x,y)$.
\item Lipschitz regularity of the derivative: There exists a constant $L>0$ such that, for all $P,Q\in\Pi(\mu,\nu)$,
\[
\Big\|
\tfrac{\delta F}{\delta \pi}(P,\cdot)
-\tfrac{\delta F}{\delta \pi}(Q,\cdot)
\Big\|_{L^\infty(\mu\otimes\nu)}
\;\le\;
L\,W_1(P,Q).
\]
In other words, the functional derivative $\tfrac{\delta F}{\delta \pi}$ is $L$–Lipschitz continuous from $(\Pi(\mu,\nu),W_1)$ into $L^\infty(\mu\otimes\nu)$.
\end{enumerate}
\end{assumption}
Under Assumption~\ref{Ass:F_full}, the functional $F$ is convex, proper, and admits a Lipschitz continuous linear derivative on $(\Pi(\mu,\nu),W_1)$. These regularity properties ensure a well–posed variational structure on the coupling set $\Pi(\mu,\nu)$ as shown in the following proposition.  


\begin{proposition}[Existence and uniqueness of the minimizer of $V$]
\label{prop:exist-uniq-minimizer}
Let Assumptions~\ref{ass:bounded_domain} and \ref{Ass:F_full} hold, and $V(\pi):=F(\pi)+H(\pi\,\|\,R), \pi\in\Pi(\mu,\nu)$. Then there exists a unique minimizer $\pi^\star\in\Pi(\mu,\nu)$ such that
\[
V(\pi^\star)=\inf_{\pi\in\Pi(\mu,\nu)}V(\pi)=:V_\star.
\]
\end{proposition}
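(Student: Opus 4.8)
The plan is to obtain existence by the direct method of the calculus of variations and uniqueness by strict convexity of the entropic term. First I would record the structure of the feasible set: under Assumption~\ref{ass:bounded_domain} the product $\mathcal X\times\mathcal Y$ is a compact Polish space, so $\mathcal P(\mathcal X\times\mathcal Y)$ is compact for the topology of weak convergence (Prokhorov), and $\Pi(\mu,\nu)$, being a weakly closed and convex subset, is weakly compact and nonempty (it contains $\mu\otimes\nu$). Both $F$ and $H(\cdot\,\|\,R)$ are nonnegative, so $V\ge 0$; moreover $V$ is proper, i.e.\ $V_\star<\infty$, since $F$ is real-valued and, in the present setting, there is at least one admissible coupling of finite relative entropy with respect to $R$.

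The crux is to show that $V$ is sequentially weakly lower semicontinuous on $\Pi(\mu,\nu)$. For the entropic term this is the classical lower semicontinuity of relative entropy (for instance via its Donsker--Varadhan dual representation). For $F$, I would derive from Assumption~\ref{Ass:F_full} the affine minorant
\[
F(\pi)\ \ge\ F(\pi')+\int_{\mathcal X\times\mathcal Y}\tfrac{\delta F}{\delta\pi}(\pi',x,y)\,d(\pi-\pi')(x,y)\qquad\text{for all }\pi,\pi',
\]
with equality at $\pi=\pi'$: this follows by combining the convexity in (i) with the linear-derivative identity in (ii), passing to the limit $\eta\downarrow 0$ in the difference quotient along the segment $\pi^\eta=(1-\eta)\pi'+\eta\pi$ and using monotonicity of difference quotients of convex functions. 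Since $\tfrac{\delta F}{\delta\pi}(\pi',\cdot)$ is continuous, hence bounded, on the compact space $\mathcal X\times\mathcal Y$ (or, alternatively, has bounded oscillation and $\pi-\pi'$ has zero total mass), each right-hand side above is a weakly continuous affine functional of $\pi$, so $F$, being a supremum of such functionals, is weakly lower semicontinuous. (In fact the joint continuity of $\tfrac{\delta F}{\delta\pi}$ on the compact product $\mathcal P(\mathcal X\times\mathcal Y)\times(\mathcal X\times\mathcal Y)$ together with the integral representation of Assumption~\ref{Ass:F_full}(ii) yields full weak continuity of $F$, but semicontinuity suffices.) Hence $V=H(\cdot\,\|\,R)+F$ is weakly lower semicontinuous.

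Existence then follows in the usual way: taking a minimizing sequence $(\pi_n)\subset\Pi(\mu,\nu)$ with $V(\pi_n)\downarrow V_\star$, weak compactness yields a subsequence converging weakly to some $\pi^\star\in\Pi(\mu,\nu)$, and lower semicontinuity gives $V(\pi^\star)\le\liminf_n V(\pi_n)=V_\star$, so $\pi^\star$ is a minimizer; in particular $V(\pi^\star)<\infty$, whence $H(\pi^\star\,\|\,R)<\infty$ and $\pi^\star\ll R$. For uniqueness I would use that $\Pi(\mu,\nu)$ is convex, $F$ is convex, and $\pi\mapsto H(\pi\,\|\,R)$ is \emph{strictly} convex on $\{\pi:\pi\ll R\}$, because $t\mapsto t\log t$ is strictly convex (for $\pi_0\ne\pi_1$ both $\ll R$, pointwise strict convexity of the densities, integrated against $R$, gives a strict inequality). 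If $\pi_1^\star\ne\pi_2^\star$ were two minimizers, both would have finite $V$, hence be $\ll R$, and $\tfrac12(\pi_1^\star+\pi_2^\star)\in\Pi(\mu,\nu)$ would satisfy $V\bigl(\tfrac12(\pi_1^\star+\pi_2^\star)\bigr)<\tfrac12 V(\pi_1^\star)+\tfrac12 V(\pi_2^\star)=V_\star$, a contradiction. The only genuinely nonroutine point is the weak lower semicontinuity of $F$ extracted from its linear-derivative representation; a minor point to state explicitly is properness of $V$ (the existence of an admissible coupling with finite entropy against $R$), which holds under the standing setup. Everything else is the standard compactness-plus-semicontinuity scheme together with strict convexity of relative entropy.
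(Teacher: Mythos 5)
Your proof is correct and follows essentially the same route as the paper: the direct method (weak compactness of $\Pi(\mu,\nu)$ plus weak lower semicontinuity of $H(\cdot\,\|\,R)$ and $F$) for existence, and strict convexity of the relative entropy on $\{\pi\ll R\}$ combined with convexity of $F$ for uniqueness; your midpoint argument for uniqueness is just a rephrasing of the paper's strict-convexity step on the entropy sublevel set. One caveat: your justification of compactness is slightly off, since a bounded Polish space need not be compact (Assumption~\ref{ass:bounded_domain} only bounds the diameter), so ``$\mathcal X\times\mathcal Y$ compact, hence $\mathcal P(\mathcal X\times\mathcal Y)$ compact'' is not warranted as stated; the correct and immediate fix is the paper's own argument, namely that $\Pi(\mu,\nu)$ is tight (because $\mu$ and $\nu$ are tight) and weakly closed, hence weakly compact by Prokhorov. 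On the other hand, your derivation of the weak lower semicontinuity of $F$ from the gradient inequality $F(\pi)\ge F(\pi')+\int\frac{\delta F}{\delta\pi}(\pi')\,d(\pi-\pi')$, exhibiting $F$ as a supremum of weakly continuous affine functionals, is a genuine bonus: the paper simply asserts this property from Assumption~\ref{Ass:F_full}. Your explicit remark that properness ($V_\star<\infty$, i.e.\ existence of a feasible coupling with finite entropy against $R$) must be checked is also apt, as the paper glosses over it in the same way.
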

The proof of the Proposition is given in Section \ref{sec:main_result_proofs}.

While Proposition \ref{prop:exist-uniq-minimizer} guarantees the existence of a unique entropic coupling $\pi^\star$, the presence of the convex term $F$ destroys the purely multiplicative structure that underlies classical Sinkhorn iterations, so that the minimizer can no longer be computed by a single marginal reweighting. 
To compute the minimizer of the composite objective
\[
\pi \;\longmapsto\; F(\pi)+H(\pi\,\|\,R)
\quad\text{over }\Pi(\mu,\nu),
\]
it is natural to separate the nonlinear convex term $F$ from the strictly convex 
entropic regularization. The latter alone yields the classical entropic optimal transport (EOT) problem, whose minimizer is obtained by Sinkhorn scaling.

\medskip

This motivates a Sinkhorn-Frank-Wolfe (SFW) decomposition, which consists of an \emph{inner} entropic best response and an \emph{outer} Frank--Wolfe update: at each iteration, we linearize $F$ around the current iterate, solve the resulting entropy-plus-linear subproblem by a single Sinkhorn scaling step, and then update the coupling along the corresponding Frank-Wolfe direction.

\textbf{Inner step (Sinkhorn): } Given the current iterate \(P\in\Pi(\mu,\nu)\), we linearize \(F\) at \(P\) and solve 
\[
\min_{\pi\in\Pi(\mu,\nu)}
\Big\langle \tfrac{\delta F}{\delta\pi}(P),\,\pi \Big\rangle + H(\pi\|R).
\]
Under Assumption~\ref{Ass:F_full}, this problem admits a unique minimizer \(\widehat P=T(P)\), characterized by the entropic optimality system
\[
\frac{d\widehat P}{dR}(x,y)
=\exp\!\bigl(-f_P(x)-g_P(y)-\tfrac{\delta F}{\delta\pi}(P,x,y)\bigr),
\]
with potentials \((f_P,g_P)\) enforcing the marginals \((\mu,\nu)\). Thus \(T\) defines the \emph{entropic best--response operator}, obtained through a single Sinkhorn scaling applied to an exponentially tilted reference kernel. The precise formulation is given in Proposition~\ref{prop:inner-problem}.

\textbf{Outer step (Frank--Wolfe): } After the inner step, we update $P$ by moving $P$ toward its best response $T(P)$. In continuous time this yields the Frank--Wolfe flow
\begin{equation}
\label{eq:FW-ODE}
\frac{d}{dt}P_t = T(P_t)-P_t, \qquad P_0\in\Pi(\mu,\nu),
\end{equation}
whose solution admits the variation formula
\begin{equation}
\label{eq:FW-variation}
P_t = e^{-t}P_0 + \int_0^t e^{-(t-s)}\,T(P_s)\,ds.
\end{equation}
Since $P_0$ and $T(P_s)$ have fixed marginals, the trajectory remains in
$\Pi(\mu,\nu)$ for all $t\ge 0$; see Proposition~\ref{prop:FW-feasibility}. So that each iteration consists of one linearization of $F$ followed by one entropic minimization. Under Assumption~\ref{Ass:F_full}, the operator $T$ is Lipschitz on $(\Pi(\mu,\nu),W_1)$, ensuring the well–posedness of the continuous flow and its discrete analogue. 


\begin{proposition}[SFW Inner Problem]
\label{prop:inner-problem}
Suppose Assumption~\ref{Ass:F_full} holds. 
For any fixed $P\in\Pi(\mu,\nu)$, consider the minimization problem
\begin{equation}
\label{eq:inner-problem}
    \inf_{\pi \in \Pi(\mu,\nu)} 
    \int_{\mathcal{X} \times \mathcal{Y}} 
        \tfrac{\delta F}{\delta m}(P,x,y)\,\pi(dx,dy) 
    + H(\pi \,\|\, R).
\end{equation}
where $R$ is a given reference measure on $\mathcal{X}\times\mathcal{Y}$. Then the problem \eqref{eq:inner-problem} admits a unique minimizer, denoted by $\hat{P}$. Moreover, $\hat{P}$ satisfies the first-order optimality condition
\begin{equation}
\label{eq:inner-first-order}
\tfrac{\delta F}{\delta m}(P,x,y)\;+\;f^P(x)\;+\;g^P(y)\;+\;\ln\!\Bigl(\tfrac{\hat{P}(x,y)}{R(x,y)}\Bigr)
\;=\;c_P
\quad\text{for $R$-a.e.\ }(x,y),
\end{equation}
together with the marginal constraints $\hat{P}\in\Pi(\mu,\nu)$. 
\end{proposition}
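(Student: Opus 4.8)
Write $\varphi_P:=\tfrac{\delta F}{\delta\pi}(P,\cdot,\cdot)$; by Assumption~\ref{Ass:F_full}(ii) this map is continuous with $\operatorname{osc}(\varphi_P)\le C_{\mathrm{osc}}$, hence bounded on $\mathcal X\times\mathcal Y$. I would first absorb the linear term into the reference measure: setting $Z_P:=\int e^{-\varphi_P}\,dR\in(0,\infty)$ and $dR_P:=Z_P^{-1}e^{-\varphi_P}\,dR$, a one-line computation gives, for every $\pi\ll R$,
\[
\int \varphi_P\,d\pi + H(\pi\,\|\,R)\;=\;H(\pi\,\|\,R_P)\,-\,\log Z_P .
\]
Thus \eqref{eq:inner-problem} is, up to an additive constant, the static Schr\"odinger problem $\inf_{\pi\in\Pi(\mu,\nu)}H(\pi\,\|\,R_P)$ with the exponentially tilted kernel $R_P\ll\mu\otimes\nu$, and I would work with this reduced form.

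\noindent\textbf{Existence and uniqueness.} These follow by the direct method. The set $\Pi(\mu,\nu)$ is tight --- given $\varepsilon>0$ pick compacts $K_{\mathcal X}\subset\mathcal X$, $K_{\mathcal Y}\subset\mathcal Y$ with $\mu(\mathcal X\setminus K_{\mathcal X}),\nu(\mathcal Y\setminus K_{\mathcal Y})<\varepsilon/2$, so that $\pi\big((\mathcal X\times\mathcal Y)\setminus(K_{\mathcal X}\times K_{\mathcal Y})\big)<\varepsilon$ for all $\pi\in\Pi(\mu,\nu)$ --- and it is weakly closed, hence weakly compact by Prokhorov. Since $\varphi_P$ is bounded and continuous, $\pi\mapsto\int\varphi_P\,d\pi$ is weakly continuous and $\pi\mapsto H(\pi\,\|\,R)$ is weakly lower semicontinuous, so the objective of \eqref{eq:inner-problem} is weakly l.s.c.\ on a weakly compact set and finite at some $\pi\in\Pi(\mu,\nu)$ (feasibility of the underlying entropic problem); a minimizer $\hat P$ exists. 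Uniqueness follows from strict convexity of $H(\cdot\,\|\,R)$ on its effective domain, linearity of the $\varphi_P$-term, and convexity of $\Pi(\mu,\nu)$.

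\noindent\textbf{First-order optimality system.} To obtain \eqref{eq:inner-first-order} I would use Fenchel/Lagrangian duality for the two linear marginal constraints. Dualizing them with potentials $f:\mathcal X\to\mathbb R$, $g:\mathcal Y\to\mathbb R$ and minimizing
\[
\int\big(\varphi_P(x,y)+f(x)+g(y)\big)\,d\pi + H(\pi\,\|\,R) - \int f\,d\mu - \int g\,d\nu
\]
freely over $\pi\in\mathcal P(\mathcal X\times\mathcal Y)$ yields the Gibbs minimizer $d\pi\propto\exp(-\varphi_P-f-g)\,dR$ and the smooth log-sum-exp dual of entropic OT. Boundedness of $\varphi_P$ supplies the constraint qualification for a zero duality gap and for attainment of the dual by a \emph{bounded} pair $(f^P,g^P)$; equality of primal and dual values then forces the unique primal minimizer to satisfy $d\hat P/dR\propto\exp(-\varphi_P-f^P-g^P)$, and taking logarithms --- moving the normalizing constant to the right-hand side as $c_P$ --- gives \eqref{eq:inner-first-order}, the marginal constraints coming from primal feasibility. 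Equivalently, one may invoke the classical theory of the $I$-projection of $R_P$ onto $\Pi(\mu,\nu)$: for bounded potential it exists, is unique, and has the product form $d\hat P=a(x)b(y)\,dR_P$ with $a,b>0$, which is exactly \eqref{eq:inner-first-order} on setting $f^P=-\log a$, $g^P=-\log b$ and absorbing $\log Z_P$ into $c_P$.

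\noindent\textbf{Main obstacle.} The existence/uniqueness part is routine. The delicate point is the first-order system: one must establish strong duality and, above all, the existence of \emph{bounded} (at least integrable) Schr\"odinger potentials $f^P,g^P$, rather than a pair that could be infinite on null sets. This is precisely where the uniform oscillation bound on $\tfrac{\delta F}{\delta\pi}$ enters --- it keeps the tilted kernel $R_P$ in the bounded-cost regime in which Sinkhorn's theorem (equivalently, Csisz\'ar's $I$-projection theory) guarantees bounded potentials and no duality gap.
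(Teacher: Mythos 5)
Your proposal is correct and follows essentially the same route as the paper: absorb the linear term into the reference measure by exponential tilting (the paper's Lemma~\ref{lem:R_P}), reduce \eqref{eq:inner-problem} to the static Schr\"odinger problem $\inf_{\pi\in\Pi(\mu,\nu)}H(\pi\,\|\,R_P)$, obtain existence and uniqueness from weak compactness of $\Pi(\mu,\nu)$, lower semicontinuity and strict convexity, and derive \eqref{eq:inner-first-order} from classical entropic-OT duality, with the bounded tilt (via the oscillation bound) guaranteeing bounded potentials. Incidentally, your constant $-\log Z_P$ is the sign that actually comes out of the chain-rule computation (the paper's displayed identity \eqref{eq:H-tilting} carries a harmless sign typo), and in either case the additive constant does not affect the minimizer.
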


Proposition~\ref{prop:inner-problem} follows directly from the classical theory of entropic optimal transport. For a fixed $P\in\Pi(\mu,\nu)$, the linear contribution $\pi\mapsto\langle\,\tfrac{\delta F}{\delta m}(P,\cdot),\pi\rangle$ can be absorbed into the reference measure by an exponential tilting.  As recalled in Lemma~\ref{lem:R_P} (see Section~\ref{sec:main_result_proofs} for a detailed proof), adding such a linear term to the entropic objective is, up to an additive constant, equivalent to replacing $R$ by the exponentially tilted kernel $R_{P} \;\propto\; e^{-\tfrac{\delta F}{\delta m}(P,\cdot)}\,R$.
Hence the inner problem \eqref{eq:inner-problem} reduces to the pure entropic minimization
\[
\min_{\pi\in\Pi(\mu,\nu)} H(\pi\|R_P).
\]
The objective is strictly convex in $\pi$, and existence follows from lower semicontinuity together with the compactness of $\Pi(\mu,\nu)$ on bounded domains.The optimality system~\eqref{eq:inner-first-order} is precisely the first order optimality condition for the entropic inner problem with tilted reference measure $R_P$, and determines the dual potentials $(f^P,g^P)$ uniquely up to an additive constant. Such characterizations are classical in entropic optimal transport; see, e.g., Nutz~\cite{nutzEOTnotes}, Carlier et al.~\cite{carlier2017convergence}, and Peyr\'e--Cuturi~\cite{peyre2019}.

\begin{proposition}
\label{prop:wellposedness}
Let $T:\Pi(\mu,\nu)\to\Pi(\mu,\nu)$ be the inner optimal transport map
\begin{equation*}
    T(P) :=\hat{P} \in \text{argmin}_{\pi\in\Pi(\mu,\nu)} \Bigl\{\int_{\mathcal{X}\times\mathcal{Y}} \tfrac{\delta F}{\delta m}(P,x,y)\,\pi(dx,dy) + H(\pi\,\|\,R) \Bigr\},
\end{equation*}
where $\hat{P}$ is defined in Proposition~\ref{prop:inner-problem}.  
Assume that Assumptions~\ref{ass:bounded_domain} and \ref{Ass:F_full} hold. Then there exists a constant $C>0$ such that, for all $P,Q\in\Pi(\mu,\nu)$,
\begin{equation}
\label{eq:T-Lipschitz}
W_1\bigl(T(P),T(Q)\bigr) \;\le\;
C\,\Big\| \tfrac{\delta F}{\delta m}(P,\cdot) -\tfrac{\delta F}{\delta m}(Q,\cdot) \Big\|_{L^\infty(\mu\otimes\nu)}
\;\le\; C\,L\,W_1(P,Q).
\end{equation}
In particular, the map $T$ is $W_1$–Lipschitz on $\Pi(\mu,\nu)$.
\end{proposition}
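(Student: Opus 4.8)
The plan is to prove the Lipschitz estimate \eqref{eq:T-Lipschitz} in two stages, matching the two inequalities in the statement. The second inequality is immediate from Assumption~\ref{Ass:F_full}(iii), so the substantive task is the first: controlling $W_1(T(P),T(Q))$ by the $L^\infty(\mu\otimes\nu)$–distance between the tilts $\tfrac{\delta F}{\delta m}(P,\cdot)$ and $\tfrac{\delta F}{\delta m}(Q,\cdot)$. Write $\varphi_P := \tfrac{\delta F}{\delta m}(P,\cdot)$ and $\varphi_Q := \tfrac{\delta F}{\delta m}(Q,\cdot)$, and recall from Proposition~\ref{prop:inner-problem} and Lemma~\ref{lem:R_P} that $T(P)$ is the unique minimizer of $H(\cdot\,\|\,R_P)$ over $\Pi(\mu,\nu)$, where $dR_P \propto e^{-\varphi_P}\,dR$, and similarly for $T(Q)$. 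Since $|\varphi_P-\varphi_Q|\le \|\varphi_P-\varphi_Q\|_\infty =: \delta$ pointwise ($\mu\otimes\nu$–a.e., hence $R$–a.e. since $R\ll\mu\otimes\nu$), the two tilted kernels are mutually comparable: $e^{-\delta}\,\tfrac{dR_Q}{dR}\le \tfrac{dR_P}{dR}\le e^{\delta}\,\tfrac{dR_Q}{dR}$ up to the normalizing constants, so that $\|\log(dR_P/dR_Q)\|_\infty \le 2\delta$ (accounting for both the pointwise ratio and the ratio of partition functions).

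The core of the argument is a stability estimate for the entropic projection $R'\mapsto \operatorname{argmin}_{\pi\in\Pi(\mu,\nu)}H(\pi\,\|\,R')$ with respect to perturbations of $R'$ in the above multiplicative sense. I would obtain this through the first–order optimality system \eqref{eq:inner-first-order}: $T(P)$ has density $\exp(-f^P(x)-g^P(y))$ with respect to $R_P$, and $T(Q)$ has density $\exp(-f^Q(x)-g^Q(y))$ with respect to $R_Q$. Taking logarithms of the ratio $\tfrac{dT(P)}{dT(Q)}$ and using the marginal constraints, one derives that the potential differences $\Delta f := f^P-f^Q$ and $\Delta g := g^P-g^Q$ (suitably normalized) are bounded in $L^\infty$ by a constant multiple of $\delta$; this is the classical Hilbert–metric/Birkhoff contraction argument for the Sinkhorn map on a bounded domain (see Nutz~\cite{nutzEOTnotes}, Carlier et al.~\cite{carlier2017convergence}), and Assumption~\ref{ass:bounded_domain} ensures the relevant oscillation constants are finite and uniform. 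Consequently $\|\log(dT(P)/dT(Q))\|_\infty \le C\delta$ for a constant $C$ depending only on $\operatorname{diam}(\mathcal X\times\mathcal Y)$ and $R$. Finally, to pass from this $L^\infty$ bound on the log–density to a $W_1$ bound, I would use that on a bounded metric space $W_1(\pi_1,\pi_2)\le \operatorname{diam}(\mathcal X\times\mathcal Y)\,\|\pi_1-\pi_2\|_{TV}$, together with the Pinsker–type or elementary inequality $\|\pi_1-\pi_2\|_{TV}\le \tfrac12(e^{\|\log(d\pi_1/d\pi_2)\|_\infty}-1)\le C'\delta$ for $\delta$ small, and a crude global bound for $\delta$ large (which is harmless since $\delta\le C_{\mathrm{osc}}$ by Assumption~\ref{Ass:F_full}(ii), so everything stays in a compact range). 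Chaining these inequalities yields $W_1(T(P),T(Q))\le C\,\|\varphi_P-\varphi_Q\|_{L^\infty(\mu\otimes\nu)}$, and then Assumption~\ref{Ass:F_full}(iii) closes the estimate.

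The main obstacle is the stability estimate for the Sinkhorn potentials, i.e., showing that an $O(\delta)$ multiplicative perturbation of the reference kernel produces only an $O(\delta)$ change in the dual potentials. The cleanest route is the Hilbert projective metric: the Sinkhorn iteration is a strict contraction in Hilbert's metric with a contraction ratio controlled by the oscillation of $\log(dR_P/d(\mu\otimes\nu))$, which is finite and uniformly bounded under Assumptions~\ref{ass:bounded_domain} and \ref{Ass:F_full}(ii); the fixed points $T(P),T(Q)$ then inherit a Lipschitz dependence on the input kernel measured in the same metric, and the Hilbert metric dominates (a multiple of) the $L^\infty$ log–distance on the positive cone over a compact space. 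An alternative, more self-contained route is to subtract the two optimality systems \eqref{eq:inner-first-order} for $P$ and $Q$, integrate against $T(P)-T(Q)$ (or against the marginals), and exploit the strict convexity of the entropy to get a quantitative bound — this avoids invoking Birkhoff's theorem but requires a careful handling of the normalizing constant $c_P-c_Q$ and of the fact that potentials are only defined up to additive constants. Either way, the compactness afforded by Assumption~\ref{ass:bounded_domain} and the uniform oscillation bound $C_{\mathrm{osc}}$ are exactly what make the constant $C$ in \eqref{eq:T-Lipschitz} uniform over $\Pi(\mu,\nu)$.
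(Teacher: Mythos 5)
Your reduction to the tilted kernels and the final conversion TV $\to W_1$ are fine, and the second inequality is indeed immediate from Assumption~\ref{Ass:F_full}(iii); the problem lies in your main route to the first inequality. The Birkhoff--Hilbert contraction argument needs the kernel to have \emph{finite projective diameter}, i.e.\ two-sided bounds $0<\underline r\le \tfrac{dR}{d(\mu\otimes\nu)}\le \overline r<\infty$ (equivalently, a bounded cost defining $R$). You assert that the oscillation of $\log\bigl(dR_P/d(\mu\otimes\nu)\bigr)$ is ``finite and uniformly bounded under Assumptions~\ref{ass:bounded_domain} and \ref{Ass:F_full}(ii)'', but those assumptions only control the oscillation of $\tfrac{\delta F}{\delta m}(P,\cdot)$; the paper assumes nothing about $R$ beyond $R\ll\mu\otimes\nu$, and in its own UAV application the density of $R$ vanishes on $\{s<t\}$ (cost $+\infty$ there), so the projective diameter is infinite, the contraction ratio $\tanh(\Delta/4)$ degenerates to $1$, and your claimed $O(\delta)$ stability of the potentials --- hence $\|\log(dT(P)/dT(Q))\|_\infty\le C\delta$ --- is not justified under the stated hypotheses. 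In short, the step you yourself identify as ``the main obstacle'' is exactly where the argument breaks.

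Moreover, potential-level stability is not needed for the coupling-level bound: your unexecuted ``alternative route'' is the one that works, and it sidesteps the normalization worries you raise. Subtract the two optimality systems \eqref{eq:inner-first-order} for $P$ and $Q$ and integrate against $T(P)-T(Q)$; the potentials $f^P\oplus g^P$, $f^Q\oplus g^Q$ and the constants drop out because $T(P),T(Q)$ share the marginals $(\mu,\nu)$, leaving the identity
\begin{equation*}
H\bigl(T(P)\,\|\,T(Q)\bigr)+H\bigl(T(Q)\,\|\,T(P)\bigr)
=\int_{\mathcal X\times\mathcal Y}\Bigl(\tfrac{\delta F}{\delta m}(Q,\cdot)-\tfrac{\delta F}{\delta m}(P,\cdot)\Bigr)\,d\bigl(T(P)-T(Q)\bigr).
\end{equation*}
Since $T(P),T(Q)\ll R\ll\mu\otimes\nu$, the right-hand side is at most a constant times $\bigl\|\tfrac{\delta F}{\delta m}(P,\cdot)-\tfrac{\delta F}{\delta m}(Q,\cdot)\bigr\|_{L^\infty(\mu\otimes\nu)}\,\|T(P)-T(Q)\|_{\mathrm{TV}}$, while Pinsker bounds the left-hand side from below by a constant times $\|T(P)-T(Q)\|_{\mathrm{TV}}^2$; dividing gives a \emph{linear} TV bound, and $W_1\le\mathrm{diam}(\mathcal X\times\mathcal Y)\,\|\cdot\|_{\mathrm{TV}}$ concludes, with constants depending only on the diameter. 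This is in substance what the paper does, except that it simply verifies the transport--entropy condition via Lemma~\ref{lem:bounded-transport} and invokes the quantitative stability result of Eckstein--Nutz \cite[Proposition~3.12]{eckstein2022quantitative}, neither of which requires any boundedness of $dR/d(\mu\otimes\nu)$.
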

\begin{proof}
The first inequality of \eqref{eq:T-Lipschitz} follows from Lemma \ref{lem:bounded-transport} and   Proposition~3.12 of Eckstein–Nutz~\cite{eckstein2022quantitative}, and the second one from Assumption~\ref{Ass:F_full}. 
\end{proof}



By Proposition~\ref{prop:wellposedness}, the inner best–response map is well posed: for every feasible coupling \(P\), the problem admits a unique optimizer \(T(P)\in\Pi(\mu,\nu)\), and the dependence \(P\mapsto T(P)\) is quantitatively stable under \(W_1\)–perturbations. This stability will serve as the sole structural input for the outer construction. Before introducing it, we fix an admissible starting point:

\begin{assumption}\label{ass:init-feasible}
The algorithm is initialized with a feasible coupling
\[
P_0 \in \Pi(\mu,\nu),
\]
that is, a probability measure on $\mathcal{X}\times\mathcal{Y}$ whose marginals coincide with $\mu$ and $\nu$. Moreover, $P_0\ll R$ and its density $\rho_0:=dP_0/dR$ is essentially bounded and bounded away from zero: there exist constants $0<b_-\le b_+<\infty$ such that
\[
b_- \le \rho_0 \le b_+
\quad R\text{-a.e.}
\]
This ensures that the marginal constraints are satisfied at the initial step and that all subsequent updates are well defined.
\end{assumption}

With the best--response operator \(T\) well defined on \(\Pi(\mu,\nu)\) and the iteration initialized at a feasible coupling \(P_0\), we now examine the continuous-time evolution obtained by driving \(P\) toward its entropic best response.  This leads to the Frank--Wolfe flow studied below, whose feasibility and structural properties are summarized in the next proposition.


\begin{proposition}
\label{prop:FW-feasibility}
Under Assumptions~\ref{Ass:F_full} and~\ref{ass:init-feasible}, the flow of measures $(P_t)_{t\ge 0}$ defined by \eqref{eq:FW-variation} satisfies
\[
P_t \in \Pi(\mu,\nu), \qquad \text{for all } t\ge 0.
\]
\end{proposition}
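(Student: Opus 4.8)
The plan is to establish feasibility \emph{together with} the well-posedness of the integral equation \eqref{eq:FW-variation}, since the best-response operator $T$ is only defined on $\Pi(\mu,\nu)$, so the integrand $T(P_s)$ already presupposes $P_s\in\Pi(\mu,\nu)$. The structural idea is that \eqref{eq:FW-variation} exhibits $P_t$ as a \emph{continuous convex average} of $P_0$ and of the best responses $(T(P_s))_{0\le s\le t}$: the weights $e^{-t}$ and $e^{-(t-s)}\,ds$ are nonnegative and normalised,
\[
e^{-t}+\int_0^t e^{-(t-s)}\,ds \;=\; e^{-t}+(1-e^{-t}) \;=\;1 .
\]
As $\Pi(\mu,\nu)$ is convex and $W_1$-closed (under Assumption~\ref{ass:bounded_domain}), with $P_0\in\Pi(\mu,\nu)$ by Assumption~\ref{ass:init-feasible} and $T(P_s)\in\Pi(\mu,\nu)$ by Proposition~\ref{prop:inner-problem}, such an average stays in $\Pi(\mu,\nu)$; the proof consists in making this precise inside a fixed-point scheme.

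Concretely, fix a finite horizon $S>0$ and work in the complete metric space $\mathcal E_S:=C\big([0,S];(\Pi(\mu,\nu),W_1)\big)$ endowed with the weighted distance $\mathrm d_\lambda(P_\bullet,Q_\bullet):=\sup_{0\le t\le S}e^{-\lambda t}W_1(P_t,Q_t)$, and define the Picard map
\[
(\Phi P_\bullet)_t \;:=\; e^{-t}P_0+\int_0^t e^{-(t-s)}\,T(P_s)\,ds .
\]
I would first check $\Phi(\mathcal E_S)\subseteq\mathcal E_S$. For $P_\bullet\in\mathcal E_S$ the curve $s\mapsto T(P_s)$ is $W_1$-continuous by Proposition~\ref{prop:wellposedness}, hence weakly continuous, so the measure-valued integral may be defined weakly by $\langle\varphi,(\Phi P_\bullet)_t\rangle:=e^{-t}\langle\varphi,P_0\rangle+\int_0^t e^{-(t-s)}\langle\varphi,T(P_s)\rangle\,ds$ for bounded continuous $\varphi$; this is a positive functional with value $1$ at $\varphi\equiv1$ by the normalisation above, hence represents a probability measure on $\mathcal X\times\mathcal Y$. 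Choosing $\varphi(x,y)=\phi(x)$ and using that $P_0$ and every $T(P_s)$ have $\mathcal X$-marginal $\mu$ gives $(\Phi P_\bullet)_t(\cdot\times\mathcal Y)=\mu$, and symmetrically $(\Phi P_\bullet)_t(\mathcal X\times\cdot)=\nu$; continuity of $t\mapsto(\Phi P_\bullet)_t$ in $W_1$ follows from the integral formula. Thus $(\Phi P_\bullet)_t\in\Pi(\mu,\nu)$ for all $t$. Next, $\Phi$ is a contraction for large $\lambda$: since $(\Phi P_\bullet)_t$ and $(\Phi Q_\bullet)_t$ are mixtures with identical weights sharing the common part $e^{-t}P_0$, convexity of $W_1$ together with $W_1(T(P_s),T(Q_s))\le CL\,W_1(P_s,Q_s)$ (Proposition~\ref{prop:wellposedness}) gives $W_1\big((\Phi P_\bullet)_t,(\Phi Q_\bullet)_t\big)\le CL\int_0^t e^{-(t-s)}W_1(P_s,Q_s)\,ds$, whence $\mathrm d_\lambda(\Phi P_\bullet,\Phi Q_\bullet)\le \tfrac{CL}{1+\lambda}\,\mathrm d_\lambda(P_\bullet,Q_\bullet)$. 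Taking $\lambda>CL-1$, Banach's fixed-point theorem produces a unique $P_\bullet\in\mathcal E_S$ solving \eqref{eq:FW-variation} on $[0,S]$; as $S$ is arbitrary and solutions agree on overlaps by uniqueness, this defines the unique flow on $[0,\infty)$.

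Since the fixed point lies in $\mathcal E_S$ for every $S$, we conclude $P_t\in\Pi(\mu,\nu)$ for all $t\ge0$, which is the assertion. If one instead takes the existence and uniqueness of the flow \eqref{eq:FW-variation} as already granted, then only the marginal computation of the preceding paragraph — the interchange of the time integral with marginalisation — is required.

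The step I expect to be delicate is precisely this interchange: one must verify that $\int_0^t e^{-(t-s)}T(P_s)\,ds$ is a genuine Borel probability measure and that its $\mathcal X$- and $\mathcal Y$-marginals are $\mu$ and $\nu$. The weak-integral construction above, legitimate because $s\mapsto T(P_s)$ is $W_1$-continuous, resolves this; note that nonnegativity and unit total mass of the resulting measure are automatic from the normalisation $e^{-t}+\int_0^t e^{-(t-s)}\,ds=1$, so once the integral is defined the remainder is bookkeeping, and the contraction estimate is routine given Proposition~\ref{prop:wellposedness}.
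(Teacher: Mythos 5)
Your proof is correct, and the marginal computation at its core coincides with the paper's own: the paper verifies feasibility either by noting that $m_X(t):=P_t(A\times\mathcal Y)$ solves $\dot m_X(t)=\mu(A)-m_X(t)$ with $m_X(0)=\mu(A)$, or, as an alternative, by evaluating \eqref{eq:FW-variation} on $A\times\mathcal Y$ to get $e^{-t}\mu(A)+\int_0^t e^{-(t-s)}\mu(A)\,ds=\mu(A)$ — the latter is exactly your convex-average argument, tested against $\varphi(x,y)=\phi(x)$. Where you genuinely differ is in scope: the paper treats well-posedness of the flow as a separate remark (the $W_1$-Lipschitz continuity of $T$ from Proposition~\ref{prop:wellposedness} gives a unique global solution of \eqref{eq:FW-ODE}, represented by \eqref{eq:FW-variation}) and then only checks the marginals, whereas you resolve the circularity that $T(P_s)$ presupposes $P_s\in\Pi(\mu,\nu)$ by running the Picard iteration directly in $C([0,S];(\Pi(\mu,\nu),W_1))$, so that invariance of $\Pi(\mu,\nu)$ is built into the fixed-point space. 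This buys a self-contained existence-uniqueness-plus-feasibility statement at the cost of extra machinery (completeness of $(\Pi(\mu,\nu),W_1)$, the weighted metric $\mathrm d_\lambda$, joint convexity of $W_1$ under mixtures); the paper's route is shorter because it takes the flow as given.

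Two minor points to tidy. First, your construction of the measure $\int_0^t e^{-(t-s)}T(P_s)\,ds$ as a positive normalised functional on $C_b(\mathcal X\times\mathcal Y)$ needs a word on tightness to invoke a Riesz-type representation on a (possibly non-compact) Polish space; it is cleaner to define the mixture setwise, $B\mapsto e^{-t}P_0(B)+\int_0^t e^{-(t-s)}T(P_s)(B)\,ds$, using that $s\mapsto T(P_s)(B)$ is Borel measurable (weak continuity plus a monotone class argument) and monotone convergence for countable additivity; alternatively note that $\{T(P_s):s\in[0,t]\}$ is $W_1$-compact, hence tight. Second, your argument (like the paper's, via Proposition~\ref{prop:wellposedness} and the closedness of $\Pi(\mu,\nu)$ in $W_1$) uses Assumption~\ref{ass:bounded_domain}, which is not listed in the statement of the proposition; this is consistent with the paper's standing framework but worth flagging explicitly.
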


See Section~\ref{sec:main_result_proofs} for the proof.  By Proposition~\ref{prop:wellposedness}, the mapping $T$ is $W_1$–Lipschitz on $\Pi(\mu,\nu)$, and therefore \eqref{eq:FW-ODE} has a unique global solution $P_t\in\Pi(\mu,\nu)$, represented by~\eqref{eq:FW-variation}.

The functional $V(\pi)=F(\pi)+H(\pi\,\|\,R)$ serves as the energy associated with the Sinkhorn-Frank-Wolfe flow $(P_t)_{t\ge0}$. Its evolution along the dynamics \eqref{eq:FW-ODE} is characterized by the following energy dissipation identity.

\begin{theorem}[Energy dissipation identity]
\label{thm:EDI}
Let Assumptions~\ref{ass:bounded_domain}, \ref{Ass:F_full}, and \ref{ass:init-feasible} hold, and let $(P_t)_{t\ge0}$ be the Sinkhorn-Frank-Wolfe (SFW) flow defined by \eqref{eq:FW-ODE}. Set
\[
V(\pi):=F(\pi)+H(\pi\,\|\,R),
\qquad 
V_\star := \inf_{\pi\in\Pi(\mu,\nu)} V(\pi).
\]
Then for every $t\ge0$, the map $t\mapsto V(P_t)$ is differentiable and
\begin{equation}
\label{eq:EDI-eq}
\frac{d}{dt}V(P_t) 
= -\Bigl(H(\hat{P}_t\,\|\,P_t)+H(P_t\,\|\,\hat{P}_t)\Bigr)
\;\le\; -\bigl(V(P_t)-V_\star\bigr),
\end{equation}
where $\hat{P}_t$ is the inner minimizer given by Proposition~\ref{prop:inner-problem}. In particular, $ V( P_t ) - V_* \leq e^{-t} ( V( P_0 ) - V_* ) $ and hence $V(P_t)\downarrow V_*$ exponentially fast.
\end{theorem}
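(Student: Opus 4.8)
The plan is to compute $\frac{d}{dt}V(P_t)$ directly from the flow equation~\eqref{eq:FW-ODE}, identify the derivative as a sum of two relative entropies, and then control this dissipation from below by the energy gap $V(P_t)-V_\star$ using convexity. First I would differentiate the two pieces of $V(P_t)=F(P_t)+H(P_t\|R)$ along the trajectory. For the $F$-term, using the linear-derivative calculus of Assumption~\ref{Ass:F_full}(ii), one gets
\[
\frac{d}{dt}F(P_t)=\int \tfrac{\delta F}{\delta\pi}(P_t,x,y)\,d(\dot P_t)(x,y)
=\int \tfrac{\delta F}{\delta\pi}(P_t,x,y)\,d\bigl(\hat P_t-P_t\bigr)(x,y),
\]
where $\hat P_t=T(P_t)$ and $\dot P_t=\hat P_t-P_t$. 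For the entropy term, writing $\rho_t=dP_t/dR$, one has $\frac{d}{dt}H(P_t\|R)=\int(\log\rho_t+1)\,d\dot P_t=\int\log\tfrac{dP_t}{dR}\,d(\hat P_t-P_t)$, the $+1$ integrating to zero since $\dot P_t$ has zero total mass. Here the regularity input from Assumption~\ref{ass:init-feasible} (density bounded away from $0$ and $\infty$, propagated along the flow via~\eqref{eq:FW-variation} and the boundedness of $d\hat P_s/dR$ from the tilted-kernel representation) is what justifies differentiating under the integral and ensures $\log\rho_t$ is bounded.

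Next I would combine the two derivatives and invoke the first-order optimality system~\eqref{eq:inner-first-order} for $\hat P_t$: for $R$-a.e.\ $(x,y)$,
\[
\tfrac{\delta F}{\delta\pi}(P_t,x,y)+\log\tfrac{d\hat P_t}{dR}(x,y)=c_{P_t}-f^{P_t}(x)-g^{P_t}(y).
\]
Substituting $\tfrac{\delta F}{\delta\pi}(P_t,\cdot)=c_{P_t}-f^{P_t}-g^{P_t}-\log\tfrac{d\hat P_t}{dR}$ into $\frac{d}{dt}V(P_t)=\int\bigl(\tfrac{\delta F}{\delta\pi}(P_t,\cdot)+\log\tfrac{dP_t}{dR}\bigr)\,d(\hat P_t-P_t)$, the constant $c_{P_t}$ and the potential terms $f^{P_t}(x)+g^{P_t}(y)$ integrate to zero against $\hat P_t-P_t$ because both measures lie in $\Pi(\mu,\nu)$ and hence share the marginals $\mu,\nu$. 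What remains is
\[
\frac{d}{dt}V(P_t)=\int\Bigl(\log\tfrac{dP_t}{dR}-\log\tfrac{d\hat P_t}{dR}\Bigr)\,d(\hat P_t-P_t)
=\int\log\tfrac{dP_t}{d\hat P_t}\,d(\hat P_t-P_t),
\]
which splits exactly as $-\int\log\tfrac{d\hat P_t}{dP_t}\,d\hat P_t-\int\log\tfrac{dP_t}{d\hat P_t}\,dP_t=-\bigl(H(\hat P_t\|P_t)+H(P_t\|\hat P_t)\bigr)$, giving the equality in~\eqref{eq:EDI-eq}.

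For the inequality $H(\hat P_t\|P_t)+H(P_t\|\hat P_t)\ge V(P_t)-V_\star$, I would argue that the symmetric Bregman-type dissipation dominates the suboptimality gap. Let $\pi^\star$ be the minimizer from Proposition~\ref{prop:exist-uniq-minimizer}. Since $\hat P_t$ minimizes the strictly convex functional $\pi\mapsto\langle\tfrac{\delta F}{\delta\pi}(P_t),\pi\rangle+H(\pi\|R)$ over $\Pi(\mu,\nu)$, a first-order/convexity comparison at $\hat P_t$ versus $\pi^\star$ gives
\[
\langle\tfrac{\delta F}{\delta\pi}(P_t),\hat P_t-\pi^\star\rangle+H(\hat P_t\|R)-H(\pi^\star\|R)\le -H(\pi^\star\|\hat P_t),
\]
(the three-point identity for relative entropy). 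Combining this with the convexity inequality for $F$, namely $F(P_t)-F(\pi^\star)\le\langle\tfrac{\delta F}{\delta\pi}(P_t),P_t-\pi^\star\rangle$ when $F$ is convex with linear derivative, and rearranging, one obtains $V(P_t)-V_\star\le \langle\tfrac{\delta F}{\delta\pi}(P_t),P_t-\hat P_t\rangle-H(\hat P_t\|R)+H(P_t\|R)-H(\pi^\star\|\hat P_t)$. Bounding $-H(\pi^\star\|\hat P_t)\le 0$ and identifying $\langle\tfrac{\delta F}{\delta\pi}(P_t),P_t-\hat P_t\rangle-H(\hat P_t\|R)+H(P_t\|R)$ with the dissipation term $H(\hat P_t\|P_t)+H(P_t\|\hat P_t)$ via the same optimality-system substitution as above yields the claim. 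Finally, the differential inequality $\frac{d}{dt}(V(P_t)-V_\star)\le-(V(P_t)-V_\star)$ integrates by Grönwall to $V(P_t)-V_\star\le e^{-t}(V(P_0)-V_\star)$.

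The main obstacle I anticipate is the rigorous justification of differentiating $t\mapsto V(P_t)$ under the integral sign: one must show $\rho_t=dP_t/dR$ stays uniformly bounded above and below on $[0,T]$ so that $\log\rho_t$ and $\tfrac{\delta F}{\delta\pi}(P_t,\cdot)$ are integrable against the signed measure $\hat P_t-P_t$, and that $t\mapsto\hat P_t$ and $t\mapsto\tfrac{\delta F}{\delta\pi}(P_t,\cdot)$ are continuous enough (this is where Proposition~\ref{prop:wellposedness} and the Lipschitz structure of Assumption~\ref{Ass:F_full} enter). The uniform density bounds should follow from the variation formula~\eqref{eq:FW-variation}: since $d\hat P_s/dR=\exp(-f^{P_s}-g^{P_s}-\tfrac{\delta F}{\delta\pi}(P_s,\cdot))$ is bounded above and below uniformly in $s$ (using the a priori bounds on the potentials, e.g.\ from Lemma~\ref{lem:bounded-transport}, and the oscillation bound $C_{\mathrm{osc}}$), the convex-combination representation of $P_t$ inherits two-sided bounds starting from the bounds $b_-\le\rho_0\le b_+$ of Assumption~\ref{ass:init-feasible}. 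The second delicate point is making the convexity comparison for $F$ precise with only a linear derivative (rather than a Gateaux derivative), which follows by integrating Assumption~\ref{Ass:F_full}(ii) along the segment from $\pi^\star$ to $P_t$ together with monotonicity of $\eta\mapsto\int\tfrac{\delta F}{\delta\pi}(\pi^\eta,\cdot)\,d(P_t-\pi^\star)$ implied by convexity.
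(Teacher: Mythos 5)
Your proposal is correct and follows essentially the same route as the paper: differentiate $F$ via the linear derivative and $H(\cdot\|R)$ via the uniform log-density envelopes propagated through \eqref{eq:FW-variation}, substitute the inner optimality system \eqref{eq:inner-first-order} and kill the potentials by the shared marginals to get $\frac{d}{dt}V(P_t)=-\bigl(H(\hat P_t\|P_t)+H(P_t\|\hat P_t)\bigr)$, then bound the dissipation below by the energy gap using convexity of $F$ and optimality of $\hat P_t$, and conclude by Gr\"onwall. The only variation is that you compare against $\pi^\star$ via a three-point (Pythagorean) inequality and discard $-H(\pi^\star\|\hat P_t)$, whereas the paper uses plain minimality of $\hat P_t$ against an arbitrary $\pi\in\Pi(\mu,\nu)$ and takes the infimum; both give the same bound. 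One small imprecision: the quantity $\langle\tfrac{\delta F}{\delta\pi}(P_t),P_t-\hat P_t\rangle+H(P_t\|R)-H(\hat P_t\|R)$ equals $H(P_t\|\hat P_t)$ alone, not the symmetric sum $H(\hat P_t\|P_t)+H(P_t\|\hat P_t)$ as you state, but this is harmless since the symmetric dissipation dominates $H(P_t\|\hat P_t)$, which is all the inequality in \eqref{eq:EDI-eq} requires.
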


The energy dissipation identity of Theorem~\ref{thm:EDI} yields a strict dissipation mechanism for the functional $V$ along the SFW flow.  Similar dissipation has been discovered in the context of mean-field minimization in \cite{chen2023entropic}.   This structural estimate allows us to identify the minimizer of $V$ and to derive quantitative convergence of the dynamics.



\begin{theorem}[Exponential convergence of the SFW flow]
\label{thm:exp-conv}
Suppose that Assumptions~\ref{ass:bounded_domain}, \ref{Ass:F_full}, and~\ref{ass:init-feasible} hold, and let $\pi^\star\in\Pi(\mu,\nu)$ denote the unique minimizer of $V$ given by Proposition \ref{prop:exist-uniq-minimizer}. Let $(P_t)_{t\ge0}$ be the Sinkhorn-Frank-Wolfe flow with initial condition $P_0$ satisfying Assumption~\ref{ass:init-feasible}. Then, for all $t\ge0$,
\begin{equation}
\label{eq:exp-energy}
H(P_t\,\|\,\pi^\star) 
\;\le\;
V(P_t)-V(\pi^\star) 
\;\le\;
e^{-t}\bigl(V(P_0)-V(\pi^\star)\bigr),
\qquad t\ge0.
\end{equation}
In particular, $V(P_t)\downarrow V(\pi^\star)$ at an exponential rate, and $P_t$ converges to $\pi^\star$ in relative entropy, i.e.\ $H(P_t\,\|\,\pi^\star)\to 0$, hence also in total variation and in the weak topology.
\end{theorem}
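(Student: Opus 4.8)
The plan is to treat the two inequalities in \eqref{eq:exp-energy} separately: the right-hand one is a Gr\"onwall argument applied to the energy--dissipation identity, and the left-hand one is a relative-strong-convexity estimate for $V$ anchored at its minimizer.

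For the right-hand bound, note that $V_\star=V(\pi^\star)$, so \eqref{eq:EDI-eq} gives $\frac{d}{dt}\bigl(V(P_t)-V(\pi^\star)\bigr)\le-\bigl(V(P_t)-V(\pi^\star)\bigr)$. Since $t\mapsto V(P_t)$ is differentiable (Theorem~\ref{thm:EDI}) and $V(P_0),V(\pi^\star)<\infty$ — the former by Assumption~\ref{ass:init-feasible}, both because the uniform a priori density bounds (Lemma~\ref{lem:bounded-transport}) keep $d\pi^\star/dR$ and $dP_0/dR$ in a compact subset of $(0,\infty)$ — multiplying by $e^{t}$ and integrating on $[0,t]$ yields $V(P_t)-V(\pi^\star)\le e^{-t}\bigl(V(P_0)-V(\pi^\star)\bigr)$.

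For the left-hand bound I would expand, for any $\pi\in\Pi(\mu,\nu)$ with essentially bounded density $d\pi/dR$,
\[
V(\pi)-V(\pi^\star) = \mathrm{(I)}+\mathrm{(II)}+\mathrm{(III)},
\]
with $\mathrm{(I)}:=F(\pi)-F(\pi^\star)-\int\tfrac{\delta F}{\delta\pi}(\pi^\star,\cdot)\,d(\pi-\pi^\star)$, $\mathrm{(II)}:=H(\pi\|R)-H(\pi^\star\|R)-\int\log\tfrac{d\pi^\star}{dR}\,d(\pi-\pi^\star)$, and $\mathrm{(III)}:=\int\bigl(\tfrac{\delta F}{\delta\pi}(\pi^\star,\cdot)+\log\tfrac{d\pi^\star}{dR}\bigr)\,d(\pi-\pi^\star)$. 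Convexity of $F$ together with the linear-derivative representation of Assumption~\ref{Ass:F_full}(ii) gives $\mathrm{(I)}\ge0$ (the difference quotient $\eta^{-1}(F(\pi^\eta)-F(\pi^\star))$, with $\pi^\eta=(1-\eta)\pi^\star+\eta\pi$, is nondecreasing in $\eta$, so its value at $\eta=1$ dominates its limit as $\eta\to0^+$). Writing $\tfrac{d\pi}{dR}=\tfrac{d\pi}{d\pi^\star}\tfrac{d\pi^\star}{dR}$ — legitimate since the a priori bounds make $\pi^\star$ and $R$ mutually equivalent — gives the algebraic identity $\mathrm{(II)}=H(\pi\|\pi^\star)$. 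Finally $\mathrm{(III)}\ge0$: it equals the one-sided directional derivative $\frac{d}{d\eta}\big|_{\eta=0^+}V(\pi^\eta)$ — the entropic part being differentiated by dominated convergence, using that $\log\tfrac{d\pi^\star}{dR}$ is bounded, together with $\int 1\,d(\pi-\pi^\star)=0$ — and this one-sided derivative is nonnegative because $\pi^\star$ minimizes $V$ over the convex set $\Pi(\mu,\nu)$. (Equivalently, $\mathrm{(III)}=0$ by the first-order optimality system: $\pi^\star$ is a fixed point of the best-response operator $T$, so Proposition~\ref{prop:inner-problem} at $P=\pi^\star$ forces $\tfrac{\delta F}{\delta\pi}(\pi^\star,x,y)+\log\tfrac{d\pi^\star}{dR}(x,y)$ to be a constant plus a function of $x$ plus a function of $y$, which integrates to zero against the difference of two couplings with marginals $(\mu,\nu)$.) Hence $V(\pi)-V(\pi^\star)\ge H(\pi\|\pi^\star)$ for every such $\pi$; taking $\pi=P_t$, which lies in $\Pi(\mu,\nu)$ by Proposition~\ref{prop:FW-feasibility} and has bounded density by the representation \eqref{eq:FW-variation} and Assumption~\ref{ass:init-feasible}, gives $H(P_t\|\pi^\star)\le V(P_t)-V(\pi^\star)$.

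Chaining the two bounds gives $H(P_t\|\pi^\star)\le e^{-t}\bigl(V(P_0)-V(\pi^\star)\bigr)\to0$, and Pinsker's inequality $\|P_t-\pi^\star\|_{\mathrm{TV}}^2\le\tfrac12 H(P_t\|\pi^\star)$ upgrades this to total-variation convergence, which in turn implies weak convergence of $P_t$ to $\pi^\star$. I expect the main obstacle to be the decomposition step of the previous paragraph: showing that $\mathrm{(III)}$ is nonnegative (or zero) relies on the first-order/fixed-point characterization of the minimizer, and every manipulation with relative entropy needs enough integrability — essentially $\log\tfrac{d\pi^\star}{dR}\in L^\infty$ and $dP_t/dR$ bounded — which is precisely where the uniform a priori density bounds for the inner minimizers are used. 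The remaining ingredients (the Gr\"onwall step, Pinsker, and total variation $\Rightarrow$ weak) are routine.
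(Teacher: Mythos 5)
Your overall route is close to the paper's: the right-hand inequality in \eqref{eq:exp-energy} is exactly the Gr\"onwall consequence of the dissipation inequality \eqref{eq:EDI-eq} (the paper already records it inside Theorem~\ref{thm:EDI}), and your left-hand inequality rests on the same algebra --- entropy decomposition relative to $\pi^\star$ plus convexity of $F$ at $\pi^\star$ --- as the paper's proof. The one genuine difference is how you neutralize the cross term $\mathrm{(III)}$: the paper proves that $\pi^\star$ is a fixed point of the best-response map (Lemma~\ref{lem:fixed-point}, via the flow started at $\pi^\star$, the integrated dissipation identity, Fatou's lemma and lower semicontinuity of the entropy), so that $\tfrac{\delta F}{\delta\pi}(\pi^\star,\cdot)+\log\tfrac{d\pi^\star}{dR}$ has the form $c^\star-f^\star(x)-g^\star(y)$ and integrates to zero against $P_t-\pi^\star$; you instead argue $\mathrm{(III)}\ge 0$ as the one-sided directional derivative of $V$ at the constrained minimizer. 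That variational-inequality shortcut is legitimate once the differentiation along the segment is justified, and it is more elementary than Lemma~\ref{lem:fixed-point}; note, however, that your parenthetical alternative ($\mathrm{(III)}=0$ via Proposition~\ref{prop:inner-problem} at $P=\pi^\star$) is not free, since it presupposes the fixed-point property $T(\pi^\star)=\pi^\star$, which is precisely the content of Lemma~\ref{lem:fixed-point} and requires its own proof.

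The gap is the two-sided density bound on $\pi^\star$ on which your whole left-hand argument (and even the finiteness of $H(P_t\,\|\,\pi^\star)$) hinges. You invoke ``uniform a priori density bounds'' for $d\pi^\star/dR$ and cite Lemma~\ref{lem:bounded-transport}, but that lemma is a transport--entropy inequality and says nothing about densities. What you actually need is $a_-\le d\pi^\star/dR\le a_+$ $R$-a.e.: without the lower bound, $P_t\ll\pi^\star$ may fail, the identity $\mathrm{(II)}=H(\pi\,\|\,\pi^\star)$ becomes an $\infty-\infty$ decomposition, and the dominated-convergence differentiation of the entropy along $\eta\mapsto\pi^\eta$ has no integrable envelope. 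In the paper this is Lemma~\ref{lem:pistar-bounded}, and it is not automatic: one first obtains uniform two-sided bounds for $dP_t/dR$ and $d\hat P_t/dR$ (Lemma~\ref{lem:Uniform_integrable}, via uniform bounds on the Schr\"odinger potentials), then extracts a weak limit along the flow, identifies it with $\pi^\star$ using Theorem~\ref{thm:EDI}, lower semicontinuity of $V$ and the uniqueness from Proposition~\ref{prop:exist-uniq-minimizer}, and finally transfers the $L^\infty$ bounds to the limit. Your closing remark correctly locates the difficulty at the inner-minimizer bounds, but the transfer of those bounds to $\pi^\star$ is a separate argument that your proposal neither performs nor cites correctly; supplying it (or reproducing Lemma~\ref{lem:pistar-bounded}) is what is missing to make the proof complete.
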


Theorems~\ref{thm:EDI} and~\ref{thm:exp-conv} together reveal the complete variational structure of the Sinkhorn-Frank-Wolfe flow. The energy dissipation identity identifies the symmetric relative entropy $H(\hat{P}_t\,\|\,P_t)+H(P_t\,\|\,\hat{P}_t)$ as the coercive quantity governing the decay of the composite functional $V$ along the dynamics. This dissipation mechanism, combined with the convexity properties encoded in Assumption~\ref{Ass:F_full}, yields both the existence of the unique minimizer $\pi^\star$ of $V$ and the exponential convergence of the trajectory $(P_t)_{t\ge0}$ toward $\pi^\star$.  

In particular, the estimate \eqref{eq:exp-energy} implies that $P_t$ converges to $\pi^\star$ in relative entropy, and hence also in total variation and in the weak topology. The proofs of Theorems~\ref{thm:EDI} and~\ref{thm:exp-conv} are provided in  Section~\ref{sec:main_result_proofs}. In Section~\ref{sec:UAV}, we complement the theoretical analysis with numerical experiments arising from a drone--relocation framework, viewed as a concrete instance of our convex cost setting, in which the local density of UAVs enters the objective through a nonlinear congestion-type penalty. These simulations illustrate how the structural constants in our analytical bounds influence both the theoretical dissipation rate and the observed convergence behavior of the Sinkhorn-Frank-Wolfe flow.


\section{Application to UAV Relocation}
\label{sec:UAV}
\subsection{Motivation}

Unmanned aerial vehicles (UAVs) are increasingly deployed in large autonomous fleets. In large-scale UAV operations, returning drones must be redistributed across a set of stations or service regions. Since individual vehicles are interchangeable, their relocation can be described at a macroscopic level by spatio-temporal probability measures. A key operational challenge is congestion: excessive flow through narrow regions of space or time leads to safety and capacity constraints. These effects can be modelled by a nonlinear congestion-type penalty acting on the transport plan through its local density.

This setting naturally leads to an entropic optimal transport problem with convex functional cost, where a reference kernel encodes nominal movement tendencies and a convex congestion functional $F$ encodes local density costs. The Sinkhorn-Frank-Wolfe (SFW) flow developed in Section~\ref{sec:main_result} applies directly in this framework, providing a scalable method for computing congestion-aware relocation plans.

\subsection{UAV Relocation Framework}
\label{subsec:UAV_model}
We work on two bounded tempo–spatial domains $\mathcal{X,Y} \subset \mathbb{R}_{+} \times \mathbb{R}^{2}$, and denote points as $(t,x)\in\mathcal{X}$ for initial time–space states and $(s,y)\in\mathcal{Y}$ for target ones. The current and desired UAV mass distributions are denoted by $\mu \in \mathcal P(\mathcal{X})$, $\nu \in \mathcal P(\mathcal{Y})$ and a feasible relocation strategy is a coupling $\pi\in\Pi(\mu,\nu)\subset\mathcal P(\mathcal{X \times Y})$ assigning mass from $(t,x)$ to $(s,y)$.

In contrast to classical optimal transport, UAV relocation must additionally account for congestion effects: routing excessive mass through narrow tempo–spatial regions increases collision risk and operational costs. We encode these effects through a convex congestion functional $F$ acting on $\pi$, together with a reference kernel $R$ on $\mathcal{X \times Y}$ describing nominal movement tendencies from $(t,x)$ to $(s,y)$. The relocation problem, therefore, fits into the entropy–regularized convex transport framework of Section~\ref{sec:main_result} via the objective
\[
V(\pi)=F(\pi)+H(\pi\,\|\,R),
\qquad \pi\in\Pi(\mu,\nu).
\]

The Sinkhorn–Frank–Wolfe flow developed earlier applies directly to this setting and provides a scalable, distributional algorithm for computing the optimal relocation plan. We next specify the choices of the reference kernel $R$, the transport cost $c(t,x,s,y)$, and the congestion functional $F$ relevant to UAV operations.

If we take the reference measure $R$ of the form $dR \propto e^{-c/\varepsilon}\,d(\mu\otimes\nu)$, then the objective function of our entropy–regularized transport problem with congestion aversion can be written as
\[
V(\pi) = 
\int_{\mathcal{X \times Y}} c(t,x,s,y)\,d\pi
+ F(\pi) + \epsilon H(\pi \|\mu \otimes \nu),
\qquad \pi\in\Pi(\mu,\nu),
\]

The first term represents the physical transportation cost determined by the energy model $c$, while the congestion functional $F$ penalizes excessive mass concentration in tempo–spatial bottlenecks.  This places the UAV relocation problem fully within the entropy–regularized convex transport framework developed in Section~\ref{sec:main_result}.  We next specify the transport cost $c$ and the congestion penalty $F$ relevant to UAV operations.

\textbf{Transport cost:} The physical energy cost function $c(t, x, s, y)$ quantifies the energy expenditure of UAVs during  flights, which incorporates both spatial and temporal factors:
\begin{equation}
\label{eq:primal_cost}
    c(t, x, s, y) = \inf_{r \leq s - t} \left(\lambda \cdot \frac{d(x,y)^2}{r^2} + \beta \right) \cdot r \cdot \mathbf{1}_{s \geq t} + \infty \cdot \mathbf{1}_{s < t}, 
\end{equation}
where $r$ is the travel time, a free variable constrained by $0\le r \leq s - t$ and $d(x,y)$ is the Euclidean distance. This cost function comprises two key components: the first term $\lambda \frac{d(x,y)^2}{r^2}$ represents the kinetic energy expenditure. This reflects the fundamental trade-off in UAV motion, where minimizing travel time incurs a higher energy cost due to the increased velocity requirement. The second term, $\beta \cdot r$, accounts for the baseline operational energy consumption over the travel duration $r$. This term captures the energy required to maintain flight, independent of velocity, and provides a lower bound on the total energy expenditure. The constraint $\infty \cdot \mathbf{1}_{s < t}$ prevents physically infeasible scenarios where a UAV would arrive before its departure time. 

Minimizing $c(t, x, s, y)$ with respect to $r$ leads to the optimal travel time in \eqref{eq:primal_cost}:
\begin{equation}
\label{eq:r_min}
    r_\text{min} = \sqrt{\frac{\lambda \cdot d(x,y)^2}{\beta}}.
\end{equation}
Thus, the transport cost function takes the form:
\begin{equation*}
    \begin{aligned}
    \label{eq:costfunction}
        c(t, x, s, y) = 
            \begin{cases}
                \left(\lambda \cdot \frac{d(x,y)^2}{r_\text{min}^2} + \beta\right) \cdot r_\text{min}, & r_\text{min} \leq s - t, \\
                \left(\lambda \cdot \frac{d(x,y)^2}{(s - t)^2} + \beta\right) \cdot (s - t), & r_\text{min} > s - t.
            \end{cases}
    \end{aligned}
\end{equation*}
This construction yields a continuous transport cost satisfying the convexity
requirements of Assumption~\ref{Ass:F_full}.

\textbf{Density penalization and congestion control: }
In tempo-spatial UAV relocation, localized concentration of trajectories creates safety and regulatory risks. To capture such effects at the distributional level, we introduce a convex congestion functional $F(\pi)$ that penalizes transport plans generating excessive density along UAV paths. Given a transport plan $\pi$, the UAV trajectory from $(t,x)$ to $(s,y)$ follows a deterministic path determined by its velocity. The minimum travel time $r_{\min}$ \eqref{eq:r_min} dictates the UAV’s velocity, allowing us to express its location at any intermediate time $s'$ as
\begin{equation}
\label{eq:trajectory}
    T_{(t,x,s,y)}(s') \;=\; x + \frac{y-x}{r_{\min}}(s'-t).
\end{equation}
Thus, each UAV’s trajectory is fully determined by its initial and final states. 

Take a tempo-spatial zone $\mathcal{Z}$ such that $\left\{\big(s', T_{(t,x,s,y)}(s')\big):~ t\le s'\le r_{\rm min}\right\} \subset \mathcal{Z}$. Let \(\{A_n\}_{n=1,\cdots,N}\) be a partition of $\mathcal{Z}$, that is, 
\begin{align*}
    \mathcal{Z} = \bigcup_{n=1}^N A_n, \quad A_n \cap A_m = \emptyset, \quad \forall n \neq m.
\end{align*}
 The density of UAVs located within a given tempo-spatial subregion $A_n$ is determined by \eqref{eq:trajectory} as
\begin{equation*}
    \int_{\mathcal{X \times Y}}
        \max_{s\le s'\le r_{\min}}
        \mathbf 1_{A_n}\bigl(s',T_{(t,x,s,y)}(s')\bigr)\, d\pi,
\end{equation*}
 This integral represents the total number of UAVs in $(A_n$ at time $s'$. In the physical sense, if this number is too large, the local density is considered excessive. The overall congestion penalty is then defined by summing over all subregions \(A_n\):
\begin{equation*}
    F(\pi)
    \;=\;
    \sum_{n=1}^N
        f_p\!\left(
            \int_{\mathcal{X \times Y}}
            \max_{s\le s'\le r_{\min}}
            \mathbf 1_{A_n}\bigl(s',T_{(t,x,s,y)}(s')\bigr)\, d\pi
        \right),
\end{equation*}
where $f_p(\cdot)$ is a convex function that imposes a higher cost for larger UAV density. In this way, the entire penalty term, as a function of the optimal transport plan $\pi$, encourages the redistribution of UAV trajectories by adjusting the final destinations of UAVs, thereby alleviating congestion. Since $f_p$ is convex, the functional $F$ is convex in $\pi$. Moreover, its linear derivative with respect to $\pi$ takes the explicit form
\begin{equation*}
\label{eq:dF/dpi}
\begin{aligned}
    \frac{\delta F}{\delta \pi}(\pi,t,x,s,y) =
    \sum_{n=1}^N
        \nabla f_p (
            \int_{\mathcal{X \times Y}} \max_{s\le s'\le r_{\min}}
            & \mathbf{1}_{A_n}\bigl(s',T_{(t,x,s,y)}(s')\bigr)\, d\pi ) \\
        & \cdot
        \max_{s\le s'\le r_{\min}}
        \mathbf{1}_{A_n}\bigl(s',T_{(t,x,s,y)}(s')\bigr).
\end{aligned}
\end{equation*}

\subsection{Numerical Experiments}
For the numerical experiments, we implement the discrete Sinkhorn-Frank-Wolfe 
scheme described in Algorithm~\ref{algo:SFW}. The state spaces $\mathcal{X,Y}$ are taken as finite grids in the tempo--spatial domain $[0,T]\times[0,1]^2$, namely. More precisely, $ \mathcal{X} = \{ ( t_k, x_i, y_j ), \, k = 1, \ldots, K, \, i = 0, \ldots, n-1, \, j = 0, \ldots, m-1 \} $ and $ \mathcal{Y} = \{ ( s_l, x_i, y_j ), \, l = 1, \ldots, L, \, i = 0, \ldots, n-1, \, j = 0, \ldots, m-1 \} $ with $ t_k, s_l \in [0, T] $, $ x_i = i / n $, $ y_j = j/m $.

\begin{algorithm}[htbp]
\caption{Sinkhorn-Frank-Wolfe (SFW) Algorithm}
\label{algo:SFW}
\begin{algorithmic}[1]
\Require initial feasible coupling $P_{0}\in\Pi(\mu,\nu)$ with $P_{0}\ll R$;
update rate $\alpha \ge 0$; maximal number of
outer iterationss $N_{\max}^O$; maximal number of 
inner iterations $N_{\max}^I$; 
tolerance $tol$
\Ensure approximate minimizer $P_{N}$ of $V$

\For{$n=0,1,\dots,N^O_{\max}-1$}

    \State \textbf{Outer Frank--Wolfe step:} compute
    \[
        \phi^{n}(t,x,s,y)
        :=\tfrac{\delta F}{\delta m}\bigl(P_{n},t,x,s,y\bigr).
    \]

    \State \hspace{1.2em} \textbf{Inner Sinkhorn iteration:} initialize $f^{n,0}\equiv0$, $g^{n,0}\equiv0$, $\widetilde P_{\,n,0}:=P_n$.

    \State \hspace{1.5em}\textbf{for } $k = 0,1,\dots,N_{\max}^I-1$

    \State \hspace{3em} Form Gibbs update:
    \[
    \hspace{3em}
    \widetilde P_{n,k+1}(t,x,s,y)
    \propto
    \exp\Bigl(
        -\phi^n(t,x,s,y)
        -f^{n,k}(t,x)
        -g^{n,k}(s,y)
    \Bigr)dR(t,x,s,y).
    \]

    \State \hspace{3em} Update $f$ to enforce the $\mu$--marginal:
    \[
    \hspace{4em}
    f^{n,k+1}(t,x)
    \gets
    f^{n,k}(t,x)
    +
    \log\!\Biggl(
        \frac{d\mu(t,x)}
        {\displaystyle\int_{\Omega}\!
                \widetilde P_{\,n,k+1}(t,x,s,y)\,ds\,dy}
    \Biggr).
    \]

    \State \hspace{3em} Update $g$ to enforce the $\nu$--marginal:
    \[
    \hspace{4em}
    g^{n,k+1}(s,y)
    \gets
    g^{n,k}(s,y)
    +
    \log\!\Biggl(
        \frac{d\nu(s,y)}
        {\displaystyle\int_{\Omega}\!
                \widetilde P_{\,n,k+1}(t,x,s,y)\,dt\,dx}
    \Biggr).
    \]

    \State \hspace{3em}\textbf{if }
    $\|\widetilde P_{\,n,k+1}-\widetilde P_{\,n,k}\|_{\mathrm{TV}}\le tol$
    \textbf{ then break}

    \State \hspace{1.5em}\textbf{end for}

    \State \hspace{1.2em} Set inner minimizer $\hat P_{\,n} := \widetilde P_{\,n,k+1}$.

    \State \textbf{Outer FW update:}
    \[
        P_{n+1} = (1-\alpha) P_n + \alpha \hat P_{\,n}.
    \]

    \If{$\|P_{n+1}-P_n\|_{\mathrm{TV}} \le tol$}
        \State \textbf{break}
    \EndIf

\EndFor

\State \Return $P_{n+1}$
\end{algorithmic}
\end{algorithm}

We next summarize the parameter choices used in the simulations. For the tempo--spatial grid we take $T=0.5$, $n=60$, and $m=41$. For the optimization model, we use the entropic parameter $\epsilon=0.1$ and a quadratic congestion penalty $f_p(z)=\gamma |z|^2$ with $\gamma=20$. The transport cost employs $\lambda=1$ and $\beta=0.001$. The SFW iterations use some fixed update rates $\alpha=0.01, 0.02, 0.04$, with at most $N^O_{\max}=40$ outer updates and $N^I_{\max}=30$ inner Sinkhorn iterations, and a stopping tolerance $\text{tol}=10^{-4}$.

Figures~\ref{fig:no_cong} and~\ref{fig:cong} illustrate the impact of the congestion penalization on the UAV trajectories. We consider $K=L=1$, with the initial state modelled as a sum of half-Gaussians and the target state concentrated at two discrete points. Without penalization, the UAVs naturally move toward the nearest target (Figure~\ref{fig:no_cong}). When the congestion term is introduced (Figure~\ref{fig:cong}), the algorithm mitigates congestion near the targets by redirecting part of the flow toward the farther location, hence visibly altering the transport directions.


Figure~\ref{fig:Vgap} displays the evolution of the energy gap $V(P_t) - V_\ast$ along the Sinkhorn--Frank--Wolfe iterations, where the horizontal axis is rescaled to the continuous time $t = \alpha s$ associated with the Frank--Wolfe flow. In semi-logarithmic scale, the three curves corresponding to $\alpha = 0.01, 0.02, 0.04$ appear essentially linear and almost collapse onto a single trajectory once expressed in time units, in agreement with the exponential dissipation rate established in Theorem~\ref{thm:exp-conv}. This indicates that the discrete iterations faithfully approximate the underlying continuous-time SFW flow and inherit its contraction mechanism.

A comparison between the three values of $\alpha$ further reveals that, once expressed in the time variable $t$, the trajectories nearly coincide.  Although different step sizes correspond to different discretizations of the flow, the resulting curves follow essentially the same decay profile.  This demonstrates that the discrete Sinkhorn-Frank-Wolfe method is a faithful time discretization of the continuous Frank--Wolfe flow: larger $\alpha$ simply progress faster along 
the same trajectory, whereas smaller $\alpha$ produce a finer sampling of the same underlying evolution.

\begin{figure}[htbp]
    \centering
    \includegraphics[width=\textwidth]{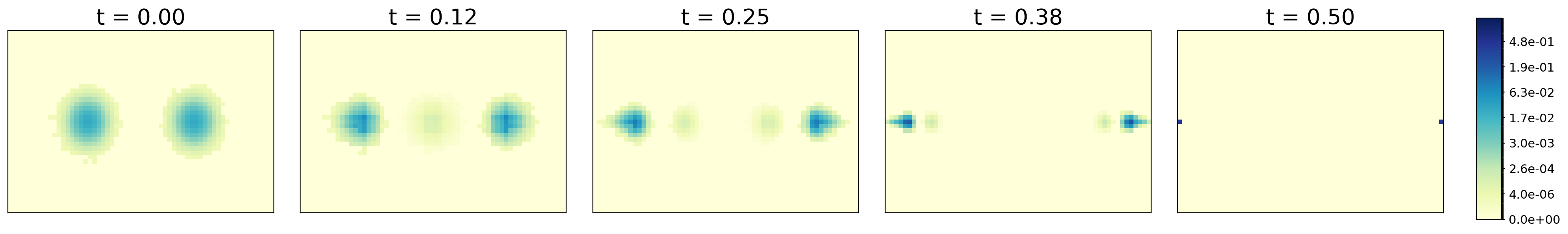}
    \caption{Sinkhorn algorithm without congestion penalization.}
    \label{fig:no_cong}
\end{figure}

\begin{figure}[htbp]
\centering
\includegraphics[width=\textwidth]{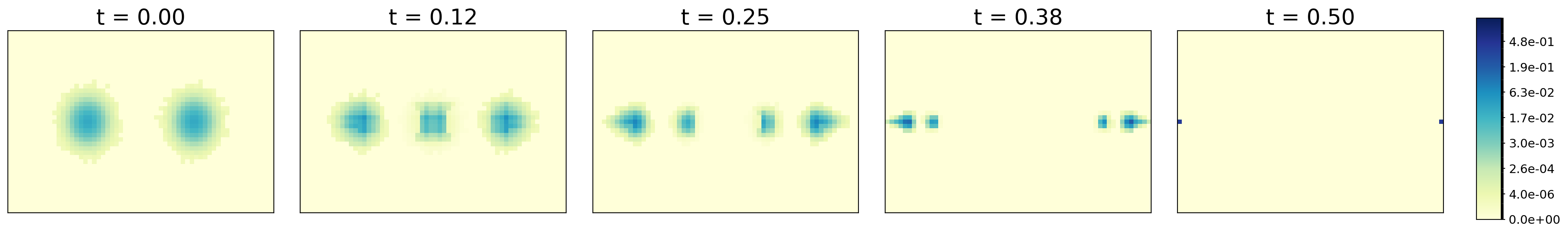}
\caption{Sinkhorn-Frank-Wolfe algorithm with congestion penalization.}
\label{fig:cong}
\end{figure}

\begin{figure}[htbp]
\centering
\includegraphics[width=0.6\textwidth]{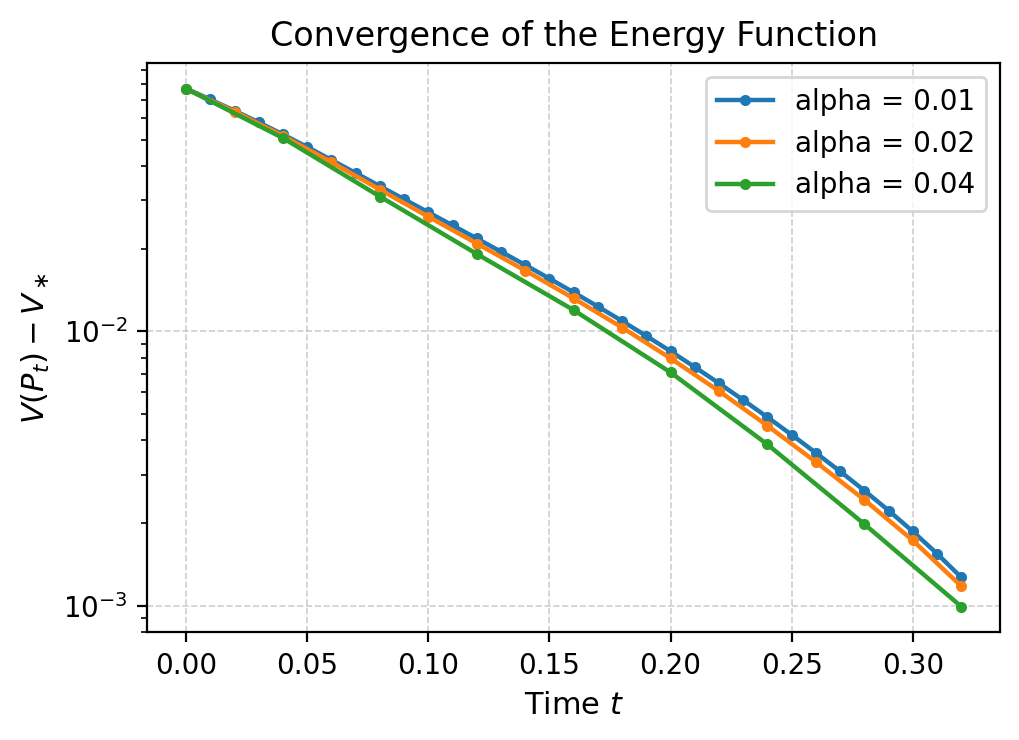} 
\caption{Evolution of the energy gap $V(P_t)-V_\ast$ along the Sinkhorn--Frank--Wolfe iterations, with the horizontal axis rescaled to the continuous time $t = \alpha s$. The three curves correspond to step sizes $\alpha = 0.01, 0.02, 0.04$ and, once expressed in time units, exhibit essentially the same decay profile, indicating that the convergence rate is intrinsic to the flow rather than to the discretization parameter $\alpha$.}
\label{fig:Vgap}
\end{figure}

\section{Proof of the Main Results}
\label{sec:main_result_proofs}

\subsection{Proof of Lemmas and Propositions}
\begin{lemma}[Transport--entropy bound on bounded domains]
\label{lem:bounded-transport}
Let Assumption~\ref{ass:bounded_domain} hold, and let 
$\pi,\theta$ be probability measures supported on 
$\mathcal{X}\times\mathcal{Y}$. 
Then, for every $q\in[1,\infty)$,
\begin{equation}\label{eq:bounded-transport}
W_q(\pi,\theta)
\;\le\;
C_q\,H(\pi\,|\,\theta)^{1/(2q)},
\qquad
C_q
=\mathrm{diam}(\mathcal{X}\times\mathcal{Y})^{\,1-\frac1q}\,2^{-1/(2q)}.
\end{equation}
\end{lemma}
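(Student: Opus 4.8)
The plan is to obtain \eqref{eq:bounded-transport} by chaining two classical, essentially one-line facts: a crude comparison of the $W_q$ distance with the total variation distance, which is available precisely because $\mathcal{X}\times\mathcal{Y}$ has finite diameter, and Pinsker's inequality relating total variation to relative entropy. Write $D:=\mathrm{diam}(\mathcal{X}\times\mathcal{Y})$ for the diameter, finite by Assumption~\ref{ass:bounded_domain}, and let $d$ denote the product metric on $\mathcal{X}\times\mathcal{Y}$. Since $\pi$ and $\theta$ are supported on $\mathcal{X}\times\mathcal{Y}$, the distance $W_q(\pi,\theta)$ is finite, and the asserted bound is trivial when $H(\pi\,\|\,\theta)=+\infty$, so we may assume $\pi\ll\theta$.

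\emph{Step 1 (from $W_q$ to total variation).} I would use the coupling characterization of the total variation distance: there is a coupling $\gamma\in\Pi(\pi,\theta)$ --- the maximal coupling, constructed from the Jordan decomposition of $\pi-\theta$ --- for which $\gamma(\{(z,z'):z\neq z'\})=\|\pi-\theta\|_{\mathrm{TV}}$. Inserting $\gamma$ into the definition of $W_q$ and using $d(z,z')\le D$ on the off-diagonal,
\[
W_q(\pi,\theta)^q\;\le\;\int d(z,z')^q\,d\gamma\;=\;\int_{\{z\neq z'\}} d(z,z')^q\,d\gamma\;\le\;D^q\,\|\pi-\theta\|_{\mathrm{TV}},
\]
so that $W_q(\pi,\theta)\le D\,\|\pi-\theta\|_{\mathrm{TV}}^{1/q}$. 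An equivalent route avoiding the maximal coupling is to note $W_q(\pi,\theta)^q\le D^{q-1}W_1(\pi,\theta)$ (from $d^q\le D^{q-1}d$) and then bound $W_1(\pi,\theta)\le D\,\|\pi-\theta\|_{\mathrm{TV}}$ via Kantorovich--Rubinstein duality, since every $1$-Lipschitz function on $\mathcal{X}\times\mathcal{Y}$ has oscillation at most $D$.

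\emph{Step 2 (Pinsker) and conclusion.} Combining Step 1 with Pinsker's inequality $\|\pi-\theta\|_{\mathrm{TV}}\le\bigl(\tfrac12 H(\pi\,\|\,\theta)\bigr)^{1/2}$ gives
\[
W_q(\pi,\theta)\;\le\;D\,\Bigl(\tfrac12 H(\pi\,\|\,\theta)\Bigr)^{1/(2q)}\;=\;D\,2^{-1/(2q)}\,H(\pi\,\|\,\theta)^{1/(2q)},
\]
which is \eqref{eq:bounded-transport} with constant $C_q=\mathrm{diam}(\mathcal{X}\times\mathcal{Y})\,2^{-1/(2q)}$; this coincides with the displayed $C_q$ once the metric is normalized so that $\mathrm{diam}(\mathcal{X}\times\mathcal{Y})\le 1$, and otherwise differs only in the power of the diameter. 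I do not expect a genuine obstacle here, since the argument is a two-step composition of standard estimates; the closest to a subtlety is the bookkeeping of conventions --- fixing the meaning of $\|\cdot\|_{\mathrm{TV}}$ (supremum over measurable sets, which pairs with the constant $\tfrac12$ in Pinsker, as opposed to the $L^1$ norm of the densities, twice as large) and invoking the existence of the maximal coupling, which holds on any measurable space and in particular on the Polish space $\mathcal{X}\times\mathcal{Y}$; the $W_1$--TV route through duality sidesteps the latter entirely.
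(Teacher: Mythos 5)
Your proof is correct and follows essentially the same route as the paper's: compare $W_q$ with $W_1$ (equivalently, with total variation) using the finite diameter, then apply Pinsker's inequality. The one genuine divergence is the bookkeeping of the diameter, and here your version is the more careful one. You insert the factor $D$ in the step $W_1(\pi,\theta)\le D\,\|\pi-\theta\|_{\mathrm{TV}}$ (or via the maximal coupling), which yields the constant $D\,2^{-1/(2q)}$, whereas the paper's proof passes directly through the inequality $W_1(\pi,\theta)\le\sqrt{\tfrac12\,H(\pi\,\|\,\theta)}$; that inequality is Pinsker for the discrete (or diameter-at-most-one) metric and fails in general when $D>1$ (e.g.\ $\theta=\tfrac12\delta_a+\tfrac12\delta_b$ with $d(a,b)=D$ and a small reweighting $\pi$ gives $W_1\approx D\sqrt{H/2}$). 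Consequently the exponent $D^{1-1/q}$ in the stated $C_q$ is only justified under the normalization $D\le 1$, a point you correctly flagged; in full generality the constant should read $D\,2^{-1/(2q)}$, exactly as you derived. The discrepancy is harmless downstream, since Lemma~\ref{lem:bounded-transport} is invoked only to verify the transport--entropy condition needed in the proof of Proposition~\ref{prop:wellposedness}, where only the existence of some finite constant matters.
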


\begin{proof}[Proof of Lemma \ref{lem:bounded-transport}]
For any $x,y\in\mathcal{X}\times\mathcal{Y}$ and $q\ge1$,
\[
\|x-y\|^q
\le
\mathrm{diam}(\mathcal{X}\times\mathcal{Y})^{\,q-1}\|x-y\|.
\]
Integrating with respect to an optimal coupling between $\pi$ and $\theta$
yields the equivalence of Wasserstein metrics on bounded domains:
\[
W_q(\pi,\theta)^q
\le 
\mathrm{diam}(\mathcal{X}\times\mathcal{Y})^{\,q-1}W_1(\pi,\theta).
\]
Applying the classical Pinsker inequality $W_1(\pi,\theta)\le \sqrt{\tfrac12\,H(\pi|\theta)}$,
we obtain
\[
W_q(\pi,\theta)
\le 
\mathrm{diam}(\mathcal{X}\times\mathcal{Y})^{\,1-\frac1q}
\bigl(\tfrac12\bigr)^{1/(2q)}
H(\pi|\theta)^{1/(2q)}.
\]
This proves \eqref{eq:bounded-transport}.
\end{proof}

\begin{remark}
    Lemma~\ref{lem:bounded-transport} provides the transport--entropy estimate \eqref{eq:bounded-transport}, which ensures that pairs of measures on bounded domains satisfy the $(I_q)$ transport--entropy condition of Eckstein--Nutz~\cite[Proposition~3.12]{eckstein2022quantitative}.  This property will be used in the proof of Proposition~\ref{prop:wellposedness} to control the Wasserstein stability of the inner best--response operator.
\end{remark}

\begin{proof}[Proof of Proposition \ref{prop:exist-uniq-minimizer}]
By Prokhorov's theorem, the feasible set $\Pi(\mu,\nu)$ is tight and closed in 
the weak topology on $\mathcal P(\mathcal X\times\mathcal Y)$, hence it is 
weakly compact. Assumption~\ref{Ass:F_full} ensures that the functional $F$ is 
bounded from below and weakly lower semicontinuous on $\Pi(\mu,\nu)$, and the 
relative entropy $H(\cdot\,\|\,R)$ is also weakly lower semicontinuous. Thus
\[
V(\pi):=F(\pi)+H(\pi\,\|\,R)
\]
is weakly lower semicontinuous and bounded from below on $\Pi(\mu,\nu)$.
Let $(\eta^n)_{n\ge1}\subset\Pi(\mu,\nu)$ be a minimizing sequence, i.e.
\[
V(\eta^n)\downarrow 
V_\star:=\inf_{\pi\in\Pi(\mu,\nu)}V(\pi)>-\infty.
\]
Since $F$ is bounded from below by some constant $m\in\mathbb R$, we have
\[
H(\eta^n\,\|\,R)
\le V(\eta^n)-m
\le V(\eta^1)-m,
\qquad n\ge1.
\]
Hence there exists $C<\infty$ such that  
\[
H(\eta^n\,\|\,R)\le C,\qquad n\ge1.
\]
Define the entropy sublevel set
\[
K_C:=\{\pi\in\Pi(\mu,\nu): H(\pi\,\|\,R)\le C\}.
\]
Since $H(\cdot\,\|\,R)$ is weakly lower semicontinuous, the sublevel set $\{\pi\in\mathcal P(\mathcal X\times\mathcal Y):H(\pi\,\|\,R)\le C\}$ is weakly closed in $\mathcal P(\mathcal X\times\mathcal Y)$. Hence $K_C$ is weakly closed in $\Pi(\mu,\nu)$, and therefore weakly compact as a closed subset of the weakly compact set $\Pi(\mu,\nu)$. Moreover, $H(\cdot\,\|\,R)$ is convex, so $K_C$ is a convex subset of $\Pi(\mu,\nu)$. Since $(\eta^n)$ is contained in $K_C$, compactness yields a weakly convergent subsequence: there exist indices $n_k\uparrow\infty$ and some $\bar\pi\in K_C$ such that
\[
\eta^{n_k} \rightharpoonup \bar\pi
\quad \text{in } \mathcal P(\mathcal X\times\mathcal Y).
\]
By weak lower semicontinuity of $V$,
\[
V(\bar\pi)
\;\le\; \liminf_{k\to\infty}V(\eta^{\,n_k})
\;=\; V_\star.
\]
Hence $V(\bar\pi)=V_\star$ and $\bar\pi\in K_C$ is a minimizer of $V$. 

Since $H(\bar\pi\,\|\,R)\le C<\infty$, necessarily $\bar\pi\ll R$. To prove uniqueness, note that $H(\cdot\,\|\,R)$ is strictly convex on the set
$\{\pi:\pi\ll R\}$, while $F$ is convex by Assumption~\ref{Ass:F_full}.  
Thus $V=F+H(\cdot\,\|\,R)$ is strictly convex on $K_C$.  
Any minimizer of $V$ lies in $K_C$ and is absolutely continuous with respect
to $R$, hence no two distinct minimizers can exist.  
Therefore the minimizer is unique; we denote it by $\pi^\star$, and
\[
V_\star=\inf_{\pi\in\Pi(\mu,\nu)}V(\pi)=V(\pi^\star).
\]
\end{proof}

\begin{lemma}[Exponential tilting and absorption of linear terms]
\label{lem:R_P}
Let $R\in\mathcal P(\mathcal X\times\mathcal Y)$ be a fixed reference measure with $R\ll\mu\otimes\nu$, and let $\varphi:\mathcal X\times\mathcal Y\to\mathbb R$ be any bounded measurable function. Define the exponentially tilted probability measure $R_\varphi\in\mathcal P(\mathcal X\times\mathcal Y)$ by
\begin{equation}
\label{eq:def-Rphi}
dR_\varphi(x,y)
\;:=\;
Z_\varphi^{-1}\,e^{-\varphi(x,y)}\,dR(x,y),
\qquad 
Z_\varphi:=\int_{\mathcal{X\times Y}} e^{-\varphi}\,dR\in(0,\infty).
\end{equation}
Then, for every $\pi\in\Pi(\mu,\nu)$ with $\pi\ll R$, the following identity holds:
\begin{equation}\label{eq:H-tilting}
\int_{\mathcal X\times\mathcal Y}\!\varphi(x,y)\,d\pi(x,y)
+H(\pi\,\|\,R)
\;=\;
H(\pi\,\|\,R_\varphi)+\log Z_\varphi.
\end{equation}
Consequently, minimizing the left-hand side over $\pi\in\Pi(\mu,\nu)$ is equivalent, up to an additive constant, to minimizing the pure entropic term $H(\pi\,\|\,R_\varphi)$.
\end{lemma}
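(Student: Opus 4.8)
The plan is to verify the identity \eqref{eq:H-tilting} by a direct density computation, and then deduce the ``consequently'' claim by observing that the added constant $\log Z_\varphi$ does not affect the $\operatorname{argmin}$.

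\textbf{Step 1: Reduce to a density identity.} Fix $\pi\in\Pi(\mu,\nu)$ with $\pi\ll R$. Since $\varphi$ is bounded, $e^{-\varphi}$ is bounded above and below by positive constants, so $R_\varphi\sim R$ (mutually absolutely continuous), $Z_\varphi\in(0,\infty)$ is well defined, and $\pi\ll R_\varphi$ as well. Writing $\rho:=\tfrac{d\pi}{dR}$, the chain rule for Radon--Nikodym derivatives gives
\[
\frac{d\pi}{dR_\varphi}(x,y)
=\frac{d\pi}{dR}(x,y)\cdot\frac{dR}{dR_\varphi}(x,y)
=\rho(x,y)\,Z_\varphi\,e^{\varphi(x,y)},
\]
using $\tfrac{dR_\varphi}{dR}=Z_\varphi^{-1}e^{-\varphi}$ from \eqref{eq:def-Rphi}.

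\textbf{Step 2: Expand the relative entropy.} Taking logarithms and integrating against $\pi$,
\[
H(\pi\,\|\,R_\varphi)
=\int \log\!\Big(\rho\,Z_\varphi\,e^{\varphi}\Big)\,d\pi
=\int \log\rho\,d\pi+\int\varphi\,d\pi+\log Z_\varphi
=H(\pi\,\|\,R)+\int\varphi\,d\pi+\log Z_\varphi.
\]
Here the splitting of the logarithm into a sum is justified because each of the three pieces is integrable: $H(\pi\,\|\,R)=\int\log\rho\,d\pi$ is finite by hypothesis (and bounded below by $0$ with $\int(\rho\log\rho)^-\,dR<\infty$ by the standard entropy bound), $\int\varphi\,d\pi$ is finite since $\varphi$ is bounded and $\pi$ is a probability measure, and $\log Z_\varphi$ is a finite constant. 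Rearranging yields exactly \eqref{eq:H-tilting}.

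\textbf{Step 3: Conclude the variational equivalence.} Since $\log Z_\varphi$ is a constant independent of $\pi$, identity \eqref{eq:H-tilting} shows that $\pi\mapsto\int\varphi\,d\pi+H(\pi\,\|\,R)$ and $\pi\mapsto H(\pi\,\|\,R_\varphi)$ differ by this constant on $\{\pi\in\Pi(\mu,\nu):\pi\ll R\}$; moreover, both sides equal $+\infty$ on couplings in $\Pi(\mu,\nu)$ that are not absolutely continuous with respect to $R$ (equivalently, with respect to $R_\varphi$), since $H(\pi\,\|\,R)=+\infty$ there while $\int\varphi\,d\pi$ stays finite. Hence the two minimization problems over $\Pi(\mu,\nu)$ have the same minimizers, and their optimal values differ by $\log Z_\varphi$. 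I do not anticipate a genuine obstacle here; the only point requiring a little care is the integrability bookkeeping in Step 2 needed to split $\log(\rho Z_\varphi e^\varphi)$ into separately integrable terms, which is handled by the boundedness of $\varphi$ together with the assumed finiteness of $H(\pi\,\|\,R)$.
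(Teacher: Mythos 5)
Your proof is correct and takes essentially the same route as the paper: both apply the Radon--Nikodym chain rule relating $d\pi/dR$ and $d\pi/dR_\varphi$ (via $dR_\varphi/dR=Z_\varphi^{-1}e^{-\varphi}$), take logarithms, integrate against $\pi$, and note that the constant $\log Z_\varphi$ is irrelevant for the minimization. Your extra integrability bookkeeping is fine; the only slight inaccuracy is calling $H(\pi\,\|\,R)<\infty$ a ``hypothesis'' (the lemma only assumes $\pi\ll R$), but since $\varphi$ is bounded both sides of \eqref{eq:H-tilting} are simultaneously $+\infty$ in that case, so nothing breaks.
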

\begin{proof}[Proof of Lemma~\ref{lem:R_P}]
\label{pro:lemma_R_P}
The result follows from the Radon–Nikodym chain rule:
\[
\log\!\Bigl(\tfrac{d\pi}{dR}\Bigr)
=
\log\!\Bigl(\tfrac{d\pi}{dR_\varphi}\Bigr)
+\log\!\Bigl(\tfrac{dR_\varphi}{dR}\Bigr)
=
\log\!\Bigl(\tfrac{d\pi}{dR_\varphi}\Bigr)
-\varphi - \log Z_\varphi.
\]
Multiplying by $d\pi$ and integrating gives
\[
\int \log\!\Bigl(\tfrac{d\pi}{dR}\Bigr)\,d\pi
=
\int \log\!\Bigl(\tfrac{d\pi}{dR_\varphi}\Bigr)\,d\pi
-\int \varphi\,d\pi
-\log Z_\varphi,
\]
that is,
\(
H(\pi\,\|\,R)
=
H(\pi\,\|\,R_\varphi)-\int\varphi\,d\pi-\log Z_\varphi,
\)
which rearranges to \eqref{eq:H-tilting}.
\end{proof}

\begin{remark}
Lemma~\ref{lem:R_P} is the standard exponential tilting identity used throughout the theory of Schr\"odinger-type problems and entropic optimal transport. We recall it here for completeness, as it underlies the reduction of the linearized subproblem to a pure entropic minimization in Proposition~\ref{prop:inner-problem}.
\end{remark}

\begin{proof}[Proof of Proposition \ref{prop:FW-feasibility}.]
Fix $A\in\mathcal B(\mathcal X)$ and set $m_X(t):=P_t(A\times\mathcal Y)$. Using \eqref{eq:FW-ODE} and linearity,
\[
\dot m_X(t)
=\hat{P}_t(A\times\mathcal Y)-P_t(A\times\mathcal Y)
=\mu(A)-m_X(t),
\]
since $\hat{P}_t\in\Pi(\mu,\nu)$ has $X$-marginal $\mu$. With $m_X(0)=P_0(A\times\mathcal Y)=\mu(A)$ by Assumption \ref{ass:init-feasible}, the unique solution is $m_X(t)\equiv \mu(A)$ for all $t\ge 0$. 
An identical argument for $m_Y(t):=P_t(\mathcal X\times B)$ with $B\in\mathcal B(\mathcal Y)$ yields $m_Y(t)\equiv \nu(B)$.
Hence $P_t\in\Pi(\mu,\nu)$ for all $t\ge 0$.
Alternatively, from \eqref{eq:FW-variation},
\[
P_t(A\times\mathcal Y)
= e^{-t}\mu(A) + \int_0^t e^{-(t-s)}\,\mu(A)\,ds
= \mu(A),
\]
and similarly for the $Y$-marginal.
\end{proof}

\subsection{Proof of Theorem \ref{thm:EDI}}
Before proving Theorem \ref{thm:EDI}, we show a lemma on the uniform integrability of $\hat{P}_t$ and $P_t$.
\begin{lemma}[Uniform envelopes for log-densities]\label{lem:Uniform_integrable}
Assume \ref{ass:bounded_domain}, \ref{Ass:F_full}, and \ref{ass:init-feasible}. Let $(P_t)_{t\ge0}$ be the flow defined by \eqref{eq:FW-ODE}--\eqref{eq:FW-variation}, and let $\hat{P}_t$ be the inner minimizers from Proposition~\ref{prop:inner-problem}, with dual potentials $(f_t,g_t)$ normalized by 
$\int f_t\,d\mu=0$.  
Then there exist constants 
\(G_{\hat\pi},G_{P}<\infty\), independent of \(t\), such that for all \(t\ge 0\),
\[
\Bigl|\log\tfrac{d\hat{P}_t}{dR}\Bigr|\le G_{\hat\pi},
\qquad
\Bigl|\log\tfrac{dP_t}{dR}\Bigr|\le G_{P}
\quad R\text{-a.e.\ on }\mathcal X\times\mathcal Y.
\]
In particular,
\[
\Bigl|\log\tfrac{dP_t}{dR}-\log\tfrac{d\hat{P}_t}{dR}\Bigr|
\le G_{\hat\pi}+G_{P},
\qquad t\ge 0.
\]
\end{lemma}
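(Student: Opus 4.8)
The plan is to bound $\log\tfrac{d\hat P_t}{dR}$ uniformly in $t$ first, then deduce the bound on $\log\tfrac{dP_t}{dR}$ from the variation-of-constants formula~\eqref{eq:FW-variation}, and read off the last display from the triangle inequality.

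\emph{The inner minimizers.} Write $\phi_P:=\tfrac{\delta F}{\delta m}(P,\cdot)$. The first observation is that $\phi_P$ is bounded uniformly in $P\in\Pi(\mu,\nu)$: since $W_1$ is bounded on $\Pi(\mu,\nu)$ under Assumption~\ref{ass:bounded_domain}, Assumption~\ref{Ass:F_full}(iii) bounds $\|\phi_P-\phi_{P_0}\|_{L^\infty(\mu\otimes\nu)}$ uniformly, while $\phi_{P_0}$ is bounded by continuity on the compact product space; hence $\Phi:=\sup_{P}\|\phi_P\|_\infty<\infty$. By Lemma~\ref{lem:R_P}, $\hat P_t$ is the entropic projection onto $\Pi(\mu,\nu)$ of the tilted reference $R_{\phi_{P_t}}$, and $\bigl|\log\tfrac{dR_{\phi_{P_t}}}{dR}\bigr|\le 2\Phi$ for all $t$, so the family $\{R_{\phi_{P_t}}\}_{t\ge 0}$ is mutually absolutely continuous with $R$ with uniformly bounded densities. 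From the optimality system of Proposition~\ref{prop:inner-problem}, $\log\tfrac{d\hat P_t}{dR}=c_{P_t}-f^{P_t}-g^{P_t}-\phi_{P_t}$, so
\[
\operatorname{osc}\!\Bigl(\log\tfrac{d\hat P_t}{dR}\Bigr)\;\le\;\operatorname{osc}(f^{P_t})+\operatorname{osc}(g^{P_t})+\operatorname{osc}(\phi_{P_t}),
\]
the last term being $\le C_{\mathrm{osc}}$ by Assumption~\ref{Ass:F_full}(ii). For the Schr\"odinger potentials $(f^{P_t},g^{P_t})$ I would invoke the classical a priori oscillation bounds for entropic optimal transport on compact domains, in the same form used for Proposition~\ref{prop:wellposedness} (Eckstein--Nutz~\cite{eckstein2022quantitative}, cf.~also \cite{nutzEOTnotes,carlier2017convergence}); because the references $R_{\phi_{P_t}}$ are uniformly comparable to $R$, those bounds hold with a constant $C_1<\infty$ independent of $t$. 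Setting $G_{\hat\pi}:=2C_1+C_{\mathrm{osc}}$ gives $\operatorname{osc}\bigl(\log\tfrac{d\hat P_t}{dR}\bigr)\le G_{\hat\pi}$. Finally, $h_t:=\tfrac{d\hat P_t}{dR}$ is nonnegative with $\int h_t\,dR=1$ and $R$ a probability measure, so finiteness of the oscillation forces the essential infimum of $\log h_t$ to be $\le 0$ and its essential supremum to be $\ge 0$; hence $|\log h_t|\le\operatorname{osc}(\log h_t)\le G_{\hat\pi}$ $R$-a.e.

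\emph{The outer flow.} By Assumption~\ref{ass:init-feasible}, $b_- R(A)\le P_0(A)\le b_+ R(A)$ for every Borel $A$ (with $b_-\le 1\le b_+$, since $\rho_0$ is a probability density against $R$), and by the previous step $e^{-G_{\hat\pi}}R(A)\le \hat P_s(A)\le e^{G_{\hat\pi}}R(A)$ for every $s\ge 0$. Substituting these into~\eqref{eq:FW-variation} and using $\int_0^t e^{-(t-s)}\,ds=1-e^{-t}\in[0,1]$ gives
\[
\min\bigl(b_-,e^{-G_{\hat\pi}}\bigr)\,R(A)\;\le\;P_t(A)\;\le\;\max\bigl(b_+,e^{G_{\hat\pi}}\bigr)\,R(A)
\]
for every Borel $A$; in particular $P_t\ll R$ and $\bigl|\log\tfrac{dP_t}{dR}\bigr|\le G_P:=\max\bigl(G_{\hat\pi},\log b_+,\log(1/b_-)\bigr)$ $R$-a.e., uniformly in $t$. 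The last display of the lemma then follows from the triangle inequality, $\bigl|\log\tfrac{dP_t}{dR}-\log\tfrac{d\hat P_t}{dR}\bigr|\le G_P+G_{\hat\pi}$.

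\emph{Main obstacle.} The hard part is the uniform-in-$t$ oscillation bound on the Schr\"odinger potentials $(f^{P_t},g^{P_t})$: along the flow the tilting $\phi_{P_t}$ ranges over the entire family $\{\tfrac{\delta F}{\delta m}(P,\cdot):P\in\Pi(\mu,\nu)\}$, and one must rule out degeneration of the potentials. The two structural facts that make this work are the compactness of $\mathcal X\times\mathcal Y$ (Assumption~\ref{ass:bounded_domain}) and the uniform bounds $\sup_P\operatorname{osc}(\phi_P)\le C_{\mathrm{osc}}$ and $\sup_P\|\phi_P\|_\infty\le\Phi$ extracted from Assumption~\ref{Ass:F_full}: together they exhibit the inner problems as uniformly bounded perturbations of a single entropic transport problem with the fixed reference $R$, so the standard a priori estimates already invoked for Proposition~\ref{prop:wellposedness} apply with constants independent of $t$.
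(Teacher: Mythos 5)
Your proof is correct and follows essentially the same route as the paper: bound the tilt $\tfrac{\delta F}{\delta m}(P_t,\cdot)$ uniformly via Assumptions~\ref{ass:bounded_domain} and \ref{Ass:F_full}, invoke the classical a priori bounds on the Schr\"odinger potentials for uniformly bounded costs (the paper cites Lemma~6.14 of Nutz~\cite{nutzEOTnotes}) to control $\log\tfrac{d\hat P_t}{dR}$, and then propagate the bound to $P_t$ through the convex-combination structure of \eqref{eq:FW-variation} together with Assumption~\ref{ass:init-feasible}. The only cosmetic difference is that you bound oscillations and convert them to sup-norm bounds using $\int \tfrac{d\hat P_t}{dR}\,dR=1$, whereas the paper bounds the normalized potentials in $L^\infty$ directly; both yield the same uniform envelopes.
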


\begin{proof}
By Assumptions~\ref{ass:bounded_domain} and \ref{Ass:F_full}(iii), there exists 
\(M<\infty\) such that, for all $t\ge0$,
\[
\phi_t(x,y):=\tfrac{\delta F}{\delta m}(P_t,x,y),
\qquad 
\|\phi_t\|_{L^\infty(\mu\otimes\nu)}\le M.
\]
Hence $e^{-M}\le e^{-\phi_t(x,y)}\le e^{M}, R\text{-a.e.,\ for all } t\ge0 $. For each \(t\ge0\), Proposition~\ref{prop:inner-problem} yields
\[
\frac{d\hat{P}_t}{dR}(x,y)
= \exp\bigl(-f_t(x)-g_t(y)-\phi_t(x,y)\bigr),
\]
with marginals \((\hat{P}_t)_X=\mu\), \((\hat{P}_t)_Y=\nu\), and \(\int f_t\,d\mu=0\). Lemma~6.14 of Nutz~\cite{nutzEOTnotes}, applied with cost \(c=\phi_t\), yields constants 
\(C_f,C_g<\infty\), independent of \(t\), such that
\[
\|f_t\|_{L^\infty(\mu)}\le C_f,
\qquad
\|g_t\|_{L^\infty(\nu)}\le C_g,
\qquad t\ge0.
\]
Therefore we have
\begin{equation}
\label{eq:bound_hat_pi}
    \Bigl|\log\tfrac{d\hat{P}_t}{dR}\Bigr|
    = |f_t+g_t+\phi_t| \le C_f + C_g + M
    =: G_{\hat\pi}, \qquad t\ge0.
\end{equation}
We now bound \(dP_t/dR\) on \(t\ge 0\). From \eqref{eq:FW-variation}, $P_t = e^{-t}P_0 + \int_0^t e^{-(t-s)}\,\hat{P}_s\,ds, t\ge0$. Since all measures are absolutely continuous with respect to \(R\), there exist densities
\[
dP_t=\rho_t\,dR,
\qquad
d\hat{P}_s=\hat\rho_s\,dR.
\]
From the bound \eqref{eq:bound_hat_pi}, we obtain constants \(0<C_-\le C_+<\infty\) such that
\[
C_- \le \hat\rho_s \le C_+,
\qquad R\text{-a.e.,\ for all }s\ge0.
\]
Note that
\[
\rho_t 
= e^{-t}\rho_0 + \int_0^t e^{-(t-s)}\,\hat\rho_s\,ds.
\]
By Assumption~\ref{ass:init-feasible}, \(b_-\le\rho_0\le b_+\). Using this property and \(C_-\le\hat\rho_s\le C_+\) and 
\(\int_0^t e^{-(t-s)}ds = 1-e^{-t}\), we obtain the pointwise bounds
\[
\rho_t 
\ge e^{-t} b_- + C_-(1-e^{-t}),
\qquad
\rho_t 
\le e^{-t} b_+ + C_+(1-e^{-t}),
\qquad t\ge 0.
\]
Since \(e^{-t}\in(0,1]\), each \(\rho_t\) is a convex combination of \(b_-\) and \(C_-\), and of \(b_+\) and \(C_+\) from above. Hence there exist constants
\[
a_- := \min\{b_-,C_-\}>0,
\qquad
a_+ := \max\{b_+,C_+\}<\infty,
\]
such that
\[
a_- \le \rho_t \le a_+,
\qquad R\text{-a.e.,\ for all }t\ge 0.
\]
It follows that
\[
\Bigl|\log\tfrac{dP_t}{dR}\Bigr|
\le \max\{|\log a_-|,|\log a_+|\}
=: G_{P},
\qquad  t \ge 0.
\]
Combining the bounds for \(P_t\) and \(\hat{P}_t\) yields
\[
\Bigl|\log\tfrac{dP_t}{dR}-\log\tfrac{d\hat{P}_t}{dR}\Bigr|
\le G_{\hat\pi}+G_{P},
\qquad  t \ge 0,
\]
which completes the proof.
\end{proof}

With the uniform envelopes of Lemma~\ref{lem:Uniform_integrable}, we can differentiate the energy along the Sinkhorn-Frank-Wolfe flow. The bounds on the log-densities of $P_t$ and $\hat\pi_t$ ensure the validity of the chain rule for both the congestion term and the entropy, and allow us to justify all limit operations appearing in the dissipation identity. We now proceed with the proof of Theorem~\ref{thm:EDI}.

\begin{proof}[Proof of Theorem \ref{thm:EDI}.]
By Assumption~\ref{Ass:F_full}, $F$ is linear differentiable on $\Pi(\mu,\nu)$ with derivative $\delta F/\delta m$ that is Lipschitz in $(\Pi(\mu,\nu),W_1)$. We have
\[
\frac{d}{dt}F(P_t) = \int_{\mathcal X\times\mathcal Y} \frac{\delta F}{\delta m}(P_t,x,y)\,\dot P_t(dx,dy).
\]
For $t\ge0$, Lemma~\ref{lem:Uniform_integrable} provides uniform $L^1(R)$ envelopes for the log-densities of $P_t$ and $\hat{P}_t$. This allows us to apply dominated convergence theorem to the map $t\mapsto H(P_t\,\|\,R)$ and obtain the chain rule for all $t\ge0$:
\[
\frac{d}{dt}H(P_t\,\|\,R)
=
\frac{d}{dt}\int \rho_t\log\rho_t\,dR
=
\int_{\mathcal X\times\mathcal Y}
\Bigl(1+\log\rho_t(x,y)\Bigr)\,\dot P_t(dx,dy),
\]
where $\rho_t:=\frac{dP_t}{dR}$,
Since both $P_t$ and $\hat{P}_t$ have unit mass and fixed marginals,
\[
\int_{\mathcal X\times\mathcal Y} \dot P_t(dx,dy)
=\int_{\mathcal X\times\mathcal Y} (\hat{P}_t-P_t)(dx,dy)
=0.
\]
Hence the constant term drops out and for the energy function $V$, we obtain
\[
\frac{d}{dt}V(P_t)
=
\int_{\mathcal X\times\mathcal Y}
\Bigl(\frac{\delta F}{\delta m}(P_t,x,y)
+\log \rho_t(x,y)\Bigr)\,(\hat{P}_t-P_t)(dx,dy).
\]
From the first-order optimality condition \eqref{eq:inner-first-order} for 
$\hat{P}_t$, we have
\begin{equation}
\label{eq:first_order_P_t}
\frac{\delta F}{\delta m}(P_t,x,y)
+\log\tfrac{d\hat{P}_t}{dR}(x,y)
+f_t(x)+g_t(y)=c_{P,t}
\quad\text{for $R$-a.e.\ $(x,y)$},
\end{equation}
for some potentials $f_t,g_t$ and constants $c_{P,t}$.
It follows that
\begin{equation}
\label{eq:expr}
\frac{\delta F}{\delta m}(P_t,x,y)
+\log\tfrac{dP_t}{dR}(x,y)
=
\log\tfrac{dP_t}{d\hat{P}_t}(x,y)
-f_t(x)-g_t(y)+c_{P,t}.
\end{equation}
Using that 
$P_t,\hat{P}_t\in\Pi(\mu,\nu)$ share the same marginals, we obtain
\begin{equation}
\label{eq:marginal-condition}
\begin{aligned}
\int_{\mathcal{X\times Y}} f_t(x)(\hat{P}_t-P_t)(dx,dy) & =\int_{\mathcal{X\times Y}} f_t(x)(\mu-\mu)(dx)=0,\\
\int_{\mathcal{X\times Y}} g_t(y)(\hat{P}_t-P_t)(dx,dy) & =\int_{\mathcal{X\times Y}} g_t(y)(\nu-\nu)(dy)=0,\\
\int_{\mathcal{X\times Y}} c_{P,t}(\hat{P}_t-P_t) &= c_{P,t}(1-1)=0.
\end{aligned}
\end{equation}
Substituting expression \eqref{eq:expr} into the derivative of $V(P_t)$, we get
\begin{equation}
\label{eq:energy_dissipation}
\frac{d}{dt}V(P_t) = \int_{\mathcal X\times\mathcal Y}
\log\tfrac{dP_t}{d\hat{P}_t}(x,y)\,(\hat{P}_t-P_t)(dx,dy) = -\Bigl(H(\hat{P}_t\,\|\,P_t)+H(P_t\,\|\,\hat{P}_t)\Bigr),
\end{equation}
which establishes the energy dissipation identity and the monotonic decay of $V(P_t)$. Since $H(\hat{P}_t\|P_t)\ge 0$, we have
\begin{equation}
\label{eq:dV_inequlity}
    \frac{dV(P_t)}{dt}\le -\,H(P_t\|\hat{P}_t).
\end{equation}
Next we relate $H(P_t\,\|\,\hat{P}_t)$ to the energy gap $V(P_t)-V(\pi)$.
Using the entropy decomposition,
\[
H(P_t\,\|\,\hat{P}_t)
=
H(P_t\,\|\,R)-H(\hat{P}_t\,\|\,R)
-\int_{\mathcal X\times\mathcal Y}
\log\tfrac{d\hat{P}_t}{dR}(x,y)\,(P_t-\hat{P}_t)(dx,dy),
\]
and substituting the inner optimality condition for $\log\tfrac{d\hat{P}_t}{dR}$ in \eqref{eq:first_order_P_t}, the terms involving $f_t,g_t,c_t$ vanish by the
same marginal argument as \eqref{eq:marginal-condition}. We obtain
\[
H(P_t\,\|\,\hat{P}_t)
=
H(P_t\,\|\,R)-H(\hat{P}_t\,\|\,R)
+\int_{\mathcal X\times\mathcal Y}
\frac{\delta F}{\delta m}(P_t,x,y)\,(P_t-\hat{P}_t)(dx,dy).
\]
By minimality of $\hat{P}_t$ in the inner problem \eqref{eq:inner-problem},
\[
H(\hat{P}_t\,\|\,R)
+\int_{\mathcal X\times\mathcal Y}\frac{\delta F}{\delta m}(P_t)\,d\hat{P}_t
\;\le\;
H(\pi\,\|\,R)
+\int_{\mathcal X\times\mathcal Y}\frac{\delta F}{\delta m}(P_t)\,d\pi,
\quad\forall\,\pi\in\Pi(\mu,\nu),
\]
hence
\[
H(P_t\,\|\,\hat{P}_t)
\;\ge\;
H(P_t\,\|\,R)-H(\pi\,\|\,R)
+\int\frac{\delta F}{\delta m}(P_t)\,(P_t-\pi).
\]
By convexity of $F$ in Assumption \ref{Ass:F_full},
\[
F(P_t)-F(\pi)
\;\le\;
\int\frac{\delta F}{\delta m}(P_t)\,(P_t-\pi),
\]
and therefore, for all $\pi\in\Pi(\mu,\nu)$,
\[
H(P_t\,\|\,\hat{P}_t)
\;\ge\;
\bigl[H(P_t\,\|\,R)+F(P_t)\bigr]
-\bigl[H(\pi\,\|\,R)+F(\pi)\bigr]
= V(P_t)-V(\pi).
\]
Taking the infimum over $\pi$ yields
\begin{equation}\label{eq:H-lower-Vgap}
H(P_t\,\|\,\hat{P}_t)
\;\ge\;
V(P_t)-\inf_{\pi\in\Pi(\mu,\nu)}V(\pi)
=
V(P_t)-V_\star.
\end{equation}
Combining \eqref{eq:dV_inequlity} and \eqref{eq:H-lower-Vgap} gives
\[
\frac{d}{dt}\bigl(V(P_t)-V_\star\bigr)
\;\le\;
-\,H(P_t\,\|\,\hat{P}_t)
\;\le\;
-\bigl(V(P_t)-V_\star\bigr),
\qquad t\ge0.
\]
By Grönwall's inequality,
\begin{equation}\label{eq:energy-exp-decay}
V(P_t)-V_\star
\;\le\;
e^{-t}\bigl(V(P_0)-V_\star\bigr),
\qquad t\ge0.
\end{equation}
In particular, $V(P_t)\downarrow V_\star$ exponentially fast.
\medskip
\end{proof}

\subsection{Proof of  Theorem \ref{thm:exp-conv}}
To establish Theorem~\ref{thm:exp-conv}, we first verify two lemmas of the variational problem.

\begin{lemma}[Bounded density of the minimizer $\pi*$]
\label{lem:pistar-bounded}
Let Assumptions~\ref{ass:bounded_domain}, \ref{Ass:F_full}, and \ref{ass:init-feasible} hold. Let $\pi^\star\in\Pi(\mu,\nu)$ be the unique minimizer of $V(\pi)$ given by Proposition \ref{prop:exist-uniq-minimizer}. 
Then there exist constants $0<a_-\le a_+<\infty$ such that
\[
a_- \;\le\; \frac{d\pi^\star}{dR} \;\le\; a_+
\quad R\text{-a.e.\ on }\mathcal X\times\mathcal Y.
\]
\end{lemma}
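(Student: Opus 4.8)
The plan is to show that the global minimizer $\pi^\star$ is a fixed point of the inner best–response operator $T$, and then to read off the two–sided density bound from the uniform Schrödinger–potential estimate already used in the proof of Lemma~\ref{lem:Uniform_integrable}.

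\textbf{Step 1 (fixed–point characterization).} The first step is to prove that $\pi^\star$ minimizes the linearized functional $J(\sigma):=\int_{\mathcal X\times\mathcal Y}\tfrac{\delta F}{\delta\pi}(\pi^\star,x,y)\,\sigma(dx,dy)+H(\sigma\,\|\,R)$ over $\Pi(\mu,\nu)$. Fix $\pi\in\Pi(\mu,\nu)$; one may assume $H(\pi\,\|\,R)<\infty$, since otherwise $J(\pi)=+\infty\ge J(\pi^\star)$. Along the segment $\pi^\eta:=(1-\eta)\pi^\star+\eta\pi\in\Pi(\mu,\nu)$, Assumption~\ref{Ass:F_full}(ii) together with continuity and (uniform) boundedness of $\tfrac{\delta F}{\delta\pi}$ gives $\tfrac{d}{d\eta}F(\pi^\eta)\big|_{\eta=0^+}=\int\tfrac{\delta F}{\delta\pi}(\pi^\star,\cdot)\,d(\pi-\pi^\star)$, which is finite, while convexity of $\eta\mapsto H(\pi^\eta\,\|\,R)$ guarantees that its right derivative $D_H$ at $\eta=0$ exists in $[-\infty,\infty)$ with $D_H\le H(\pi\,\|\,R)-H(\pi^\star\,\|\,R)<\infty$. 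Minimality of $\pi^\star$ for $V$ then forces $\tfrac{d}{d\eta}V(\pi^\eta)\big|_{\eta=0^+}=\int\tfrac{\delta F}{\delta\pi}(\pi^\star,\cdot)\,d(\pi-\pi^\star)+D_H\ge 0$, in particular $D_H>-\infty$. But this same quantity is the right derivative at $\eta=0$ of the convex map $\eta\mapsto J(\pi^\eta)$ (its $F$–linearized part is affine in $\eta$ and its entropy part has right derivative $D_H$), and a convex function on $[0,1]$ with nonnegative right derivative at the left endpoint is nondecreasing, whence $J(\pi)\ge J(\pi^\star)$. As $\pi$ is arbitrary, $\pi^\star$ solves the inner problem \eqref{eq:inner-problem} with base point $P=\pi^\star$, so by the uniqueness part of Proposition~\ref{prop:inner-problem} we get $\pi^\star=T(\pi^\star)$.

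\textbf{Step 2 (density bounds).} Since $\pi^\star=T(\pi^\star)=\widehat{\pi^\star}$, Proposition~\ref{prop:inner-problem} at $P=\pi^\star$ yields the explicit density $\tfrac{d\pi^\star}{dR}(x,y)=\exp\!\big(-f^{\pi^\star}(x)-g^{\pi^\star}(y)-\tfrac{\delta F}{\delta\pi}(\pi^\star,x,y)\big)$ with potentials normalized by $\int f^{\pi^\star}\,d\mu=0$. Arguing exactly as in the proof of Lemma~\ref{lem:Uniform_integrable}, Assumptions~\ref{ass:bounded_domain} and \ref{Ass:F_full} bound $\|\tfrac{\delta F}{\delta\pi}(\pi^\star,\cdot)\|_{L^\infty(\mu\otimes\nu)}\le M$ for a constant $M$ independent of the base point, and then Lemma~6.14 of Nutz~\cite{nutzEOTnotes}, applied with cost $c=\tfrac{\delta F}{\delta\pi}(\pi^\star,\cdot)$, produces $\|f^{\pi^\star}\|_{L^\infty(\mu)}\le C_f$ and $\|g^{\pi^\star}\|_{L^\infty(\nu)}\le C_g$. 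Hence $\big|\log\tfrac{d\pi^\star}{dR}\big|\le C_f+C_g+M$ $R$–a.e., which is the claim with $a_-:=e^{-(C_f+C_g+M)}$ and $a_+:=e^{\,C_f+C_g+M}$. Equivalently, one may directly invoke the bound \eqref{eq:bound_hat_pi} from the proof of Lemma~\ref{lem:Uniform_integrable}, whose constant $G_{\hat\pi}$ is uniform over all feasible base points, at the point $\widehat{\pi^\star}=\pi^\star$.

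\textbf{Main obstacle.} The delicate point is Step~1: the relative entropy is only lower semicontinuous and its one–sided directional derivative along $\pi^\eta$ could a priori equal $-\infty$, and no integrability of $\log\tfrac{d\pi^\star}{dR}$ may be assumed since that is exactly what is being proven. Passing through the convex linearized functional $J$ — rather than differentiating $V$ directly and trying to identify $D_H$ with $\int\log\tfrac{d\pi^\star}{dR}\,d(\pi-\pi^\star)$ — avoids this issue and reduces the lemma to the already available regularity of the inner entropic problem together with the standard boundedness of its Schrödinger potentials.
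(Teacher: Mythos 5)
Your proof is correct, but it takes a genuinely different route from the paper. The paper argues dynamically: it uses the uniform two-sided density bounds along the SFW flow (Lemma~\ref{lem:Uniform_integrable} together with Assumption~\ref{ass:init-feasible}), the decay $V(P_t)\downarrow V_\star$ from Theorem~\ref{thm:EDI}, and weak compactness of $\Pi(\mu,\nu)$ to identify a weak limit of $P_{t_n}$ with $\pi^\star$, and then transfers the uniform $L^\infty(R)$ bounds on $dP_t/dR$ to the weak limit. You instead establish the fixed-point property $\pi^\star=T(\pi^\star)$ \emph{directly}, by showing that the right derivative along the segment $\pi^\eta=(1-\eta)\pi^\star+\eta\pi$ of the linearized functional $J$ is nonnegative (using minimality of $\pi^\star$ for $V$ to rule out a $-\infty$ entropy directional derivative), and then read the two-sided density bound off the entropic optimality system with the uniform bound on $\tfrac{\delta F}{\delta\pi}$ and the Schr\"odinger-potential estimate of Nutz's Lemma~6.14 — exactly the ingredient behind \eqref{eq:bound_hat_pi}. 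Your handling of the delicate point (the entropy's one-sided derivative) via the convex function $\eta\mapsto J(\pi^\eta)$ is sound, and there is no circularity: you do not use Lemma~\ref{lem:pistar-bounded} or Lemma~\ref{lem:fixed-point} to get the fixed point. What each approach buys: yours is static, needs neither the flow nor Assumption~\ref{ass:init-feasible}, avoids the paper's somewhat terse "weak limit inherits the $L^\infty$ bounds" step, and delivers the fixed-point identity of Lemma~\ref{lem:fixed-point} as a byproduct (so the later Fatou/flow argument there could be bypassed); the paper's proof, on the other hand, requires no new variational computation and simply reuses the flow machinery already established in Theorem~\ref{thm:EDI} and Lemma~\ref{lem:Uniform_integrable}.
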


\begin{proof}[Proof of Lemma \ref{lem:pistar-bounded}]
Let $(P_t)_{t\ge0}$ be the SFW flow defined by~\eqref{eq:FW-ODE} with initial condition $P_0$. By Lemma~\ref{lem:Uniform_integrable} and Assumption~\ref{ass:init-feasible}, there exist constants $0<a_-\le a_+<\infty$ such that, for all $ t \ge 0$,
\begin{equation}
\label{eq:Pt-bounds}
a_- \;\le\; \frac{dP_t}{dR} \;\le\; a_+
\quad R\text{-a.e.\ on }\mathcal X\times\mathcal Y.
\end{equation}
Take a sequence $(t_n)_{n\ge1}$ with $t_n\to\infty$ and $t_n\ge 0$ for all $n$. Since $\Pi(\mu,\nu)$ is weakly compact, there exists a subsequence $(t_{n_k})_{k\ge1}$ and a limit point $\tilde P\in\Pi(\mu,\nu)$ such that
\[
P_{t_{n_k}} \rightharpoonup \tilde P
\quad\text{in }\mathcal P(\mathcal X\times\mathcal Y)\ \text{as }k\to\infty.
\]
On the other hand, Theorem~\ref{thm:EDI} implies that $V(P_t)\downarrow V(\pi^\star)$ as $t\to\infty$, hence
\[
V(\tilde P) \le \liminf_{k\to\infty}V(P_{t_{n_k}}) = V(\pi^\star).
\]
Since $\pi^\star$ is the unique minimizer of $V$ on $\Pi(\mu,\nu)$ by Proposition \ref{prop:exist-uniq-minimizer}, we obtain
\[
V(\tilde P)=V(\pi^\star)
\quad\text{and therefore}\quad
\tilde P=\pi^\star.
\]
For each $k$, the measure $P_{t_{n_k}}$ is absolutely continuous with respect to $R$ and its density satisfies the uniform bounds \eqref{eq:Pt-bounds}. Since the densities $\frac{dP_{t_{n_k}}}{dR}$ are uniformly bounded in $L^\infty(R)$ and $P_{t_{n_k}}\rightharpoonup\pi^\star$ weakly, a standard compactness argument in $L^\infty(R)$ shows that the weak limit $\pi^\star$ is absolutely continuous with respect to $R$, and its Radon–Nikodym derivative inherits the bounds \eqref{eq:Pt-bounds}, that is
\[
a_- \;\le\; \frac{d\pi^\star}{dR} \;\le\; a_+
\quad R\text{-a.e.}
\]
\end{proof}
\begin{lemma}[Fixed-point identity for the minimizer]
\label{lem:fixed-point}
Let Assumptions~\ref{ass:bounded_domain}, \ref{Ass:F_full}, and \ref{ass:init-feasible} hold. Let $\pi^\star$ be the unique minimizer of $V$ given by Proposition~\ref{prop:exist-uniq-minimizer}. Then $\pi^\star$ is a fixed point of the entropic best-response map:
\[
\hat{\,\pi^\star} = \pi^\star.
\]
Equivalently, $\pi^\star$ satisfies the first-order optimality system
\begin{equation}
\label{eq:pi_star_first_order_restate}
\frac{\delta F}{\delta m}(\pi^\star,x,y)
+ \log\!\Bigl(\tfrac{d\pi^\star}{dR}(x,y)\Bigr)
+ f^\star(x) + g^\star(y) = c^\star
\quad\text{for $R$-a.e.\ }(x,y),
\end{equation}
for some measurable potentials $f^\star,g^\star$ and constant $c^\star$, together with $\pi^\star\in\Pi(\mu,\nu)$.
\end{lemma}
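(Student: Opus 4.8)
The plan is to exploit the energy--dissipation identity of Theorem~\ref{thm:EDI} in reverse: initialize the SFW flow \emph{at the minimizer itself} and observe that it cannot dissipate any energy, so it must already be stationary.

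\textbf{Step 1 (the minimizer is an admissible initialization).} First I would check that $P_0:=\pi^\star$ satisfies Assumption~\ref{ass:init-feasible}. Feasibility $\pi^\star\in\Pi(\mu,\nu)$ is given by Proposition~\ref{prop:exist-uniq-minimizer}, and $\pi^\star\ll R$ follows from $H(\pi^\star\|R)=V(\pi^\star)-F(\pi^\star)<\infty$. The two-sided density bound $a_-\le d\pi^\star/dR\le a_+$ is precisely Lemma~\ref{lem:pistar-bounded}. Hence the flow $(P_t)_{t\ge0}$ defined by \eqref{eq:FW-ODE}--\eqref{eq:FW-variation} and started at $\pi^\star$ is well defined, remains in $\Pi(\mu,\nu)$, and Theorem~\ref{thm:EDI} applies to it.

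\textbf{Step 2 (no dissipation forces a fixed point).} Since $\pi^\star$ is the global minimizer, $V(P_t)\ge V_\star=V(\pi^\star)=V(P_0)$ for every $t\ge0$, while Theorem~\ref{thm:EDI} gives $V(P_t)\le V(P_0)$; hence $t\mapsto V(P_t)$ is constant and $\frac{d}{dt}V(P_t)\equiv0$. Substituting this into the dissipation identity \eqref{eq:EDI-eq} yields $H(\hat P_t\,\|\,P_t)+H(P_t\,\|\,\hat P_t)=0$ for all $t$, in particular at $t=0$, where $P_0=\pi^\star$ and $\hat P_0=T(\pi^\star)=\hat{\,\pi^\star}$. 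Nonnegativity of the relative entropy, together with the fact that it vanishes only when the two measures coincide, gives $\hat{\,\pi^\star}=\pi^\star$.

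\textbf{Step 3 (first-order system and the converse).} Applying Proposition~\ref{prop:inner-problem} with $P=\pi^\star$, the inner minimizer $\hat{\,\pi^\star}$ satisfies \eqref{eq:inner-first-order}; substituting $\hat{\,\pi^\star}=\pi^\star$ yields \eqref{eq:pi_star_first_order_restate} with $f^\star:=f^{\pi^\star}$, $g^\star:=g^{\pi^\star}$, $c^\star:=c_{\pi^\star}$, together with $\pi^\star\in\Pi(\mu,\nu)$. For the converse, if $\pi^\star$ satisfies \eqref{eq:pi_star_first_order_restate} then it fulfils the first-order optimality condition of the strictly convex inner problem \eqref{eq:inner-problem} at $P=\pi^\star$; by strict convexity this condition is also sufficient, so $\pi^\star$ is the unique inner minimizer, i.e.\ $\hat{\,\pi^\star}=\pi^\star$.

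\textbf{Anticipated obstacle.} There is no genuinely hard analytic step once Lemma~\ref{lem:pistar-bounded} is in hand; the only point needing care is legitimising the use of Theorem~\ref{thm:EDI} with $P_0=\pi^\star$, i.e.\ checking that the uniform log-density envelopes of Lemma~\ref{lem:Uniform_integrable} (which underpin the differentiation of $t\mapsto V(P_t)$) indeed hold along this particular flow — and this is exactly where the two-sided bound on $d\pi^\star/dR$ is used. Alternatively one can bypass the flow and argue variationally: for any competitor $\pi\in\Pi(\mu,\nu)$ with $\pi\ll R$, convexity of $V$ and minimality of $\pi^\star$ give $\big\langle \tfrac{\delta F}{\delta m}(\pi^\star)+\log\tfrac{d\pi^\star}{dR},\,\pi-\pi^\star\big\rangle\ge0$ (the constant $1$ arising from differentiating the entropy integrates to $0$), where the differentiation of $\eta\mapsto H((1-\eta)\pi^\star+\eta\pi\,\|\,R)$ at $\eta=0^+$ is justified precisely by the boundedness of $\log(d\pi^\star/dR)$; combined with convexity of $H(\cdot\|R)$ this shows $\pi^\star$ minimizes $\pi\mapsto\langle\tfrac{\delta F}{\delta m}(\pi^\star),\pi\rangle+H(\pi\|R)$ over $\Pi(\mu,\nu)$, and uniqueness of that minimizer again gives $\hat{\,\pi^\star}=\pi^\star$.
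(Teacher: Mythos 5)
Your proposal is correct and follows the same core strategy as the paper: run the SFW flow from $Q_0=\pi^\star$ (legitimised by Lemma~\ref{lem:pistar-bounded}, exactly as the paper does), observe that minimality forbids any energy decrease, and conclude that the symmetric relative entropy between $P_t$ and $\hat P_t$ must vanish, forcing $T(\pi^\star)=\pi^\star$; the first-order system then comes from Proposition~\ref{prop:inner-problem}. The only real difference is how the zero dissipation is converted into the fixed-point identity: you invoke the pointwise form of Theorem~\ref{thm:EDI} directly at $t=0$ (since $V(P_t)\equiv V_\star$ implies $\frac{d}{dt}V(P_t)=0$, hence $H(\hat P_0\|P_0)+H(P_0\|\hat P_0)=0$), whereas the paper integrates the identity over $[0,t]$, obtains $\int_0^t\bigl(H(Q_u\|\hat Q_u)+H(\hat Q_u\|Q_u)\bigr)\,du=0$, and then extracts the conclusion at the initial time via Fatou, $W_1$-continuity of the flow, Lipschitz continuity of $T$, and joint lower semicontinuity of the relative entropy. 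Your shortcut is legitimate given that Theorem~\ref{thm:EDI} is stated with differentiability and the identity holding for \emph{every} $t\ge0$; the paper's detour buys robustness in case the identity were only available in integrated or almost-everywhere form. Your additional converse direction (first-order system $\Rightarrow$ fixed point, by sufficiency of the optimality condition for the strictly convex inner problem) and the alternative flow-free variational argument are sound supplements not present in the paper's proof, with the caveat you already note that the directional differentiation of the entropy at $\pi^\star$ requires the two-sided bound on $d\pi^\star/dR$ and is only needed for competitors of finite entropy.
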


\begin{proof}[Proof of Lemma~\ref{lem:fixed-point}]
Consider the SFW flow started at the minimizer: let $(Q_t)_{t\ge0}$ solve
\[
\dot Q_t = T(Q_t)-Q_t, \qquad Q_0=\pi^\star,
\]
where $T(P)=\hat P$. By Lemma~\ref{lem:pistar-bounded}, the initial condition $Q_0=\pi^\star$ is absolutely continuous with respect to $R$ with a uniformly bounded density, hence the flow started at $\pi^\star$ enjoys the same uniform $L^\infty$ density bounds and log-density envelopes as the original flow starting from $P_0$. By Proposition~\ref{prop:wellposedness}, the vector field $P\mapsto T(P)-P$ is $W_1$–Lipschitz, hence the flow $t\mapsto Q_t$ is globally well defined. For any $0\le s<t$, integrating the energy–dissipation identity of 
Theorem~\ref{thm:EDI} along $(Q_u)_{u\in[s,t]}$ yields
\begin{equation}
\label{eq:fixed-EDI-integrated}
V(Q_s)-V(Q_t)
=
\int_s^t \Bigl(
H(Q_u\,\|\,\hat Q_u)
+H(\hat Q_u\,\|\,Q_u)
\Bigr)\,du.
\end{equation}
Since $\pi^\star$ is the unique minimizer of $V$, we have $V(Q_t)\ge V(\pi^\star)$ for all $t\ge0$. Specializing \eqref{eq:fixed-EDI-integrated} to $s=0$ and $Q_0=\pi^\star$ gives
\[
0\ge V(Q_0)-V(Q_t) =\int_0^t \Bigl(H(Q_u\,\|\,\hat Q_u)
+H(\hat Q_u\,\|\,Q_u)
\Bigr)\,du.
\]
Thus, for all $t \ge 0$,
\begin{equation}
\label{eq:fixed-integrand-zero}
\int_0^t \Bigl(H(Q_u\,\|\,\hat Q_u)+H(\hat Q_u\,\|\,Q_u)
\Bigr)\,du = 0.
\end{equation}
Since the integrand in \eqref{eq:fixed-integrand-zero} is nonnegative,  fix any sequence $t_n\downarrow 0$ and set
\[
f_n(v):= H\bigl(Q_{t_n v}\,\|\,\hat Q_{t_n v} \bigr)+H\bigl( \hat Q_{t_n v}\,\|\,Q_{t_n v}\bigr) \ge 0.
\]
Changing variables \(u=t_nv\) in \eqref{eq:fixed-integrand-zero} gives
\[
0=\int_0^1 f_n(v)\,dv.
\]
By Fatou’s lemma,
\[
\int_0^1 \liminf_{n\to\infty} f_n(v)\,dv = 0,
\]
hence $\liminf_{n\to\infty} f_n(v)=0$ for a.e. $v\in[0,1]$. Fix \(v_0\in(0,1]\) such that \(f_n(v_0)\to0\) along a subsequence. By continuity of the flow in $W_1$,
\[
Q_{t_n v_0} \xrightarrow[n\to\infty]{W_1} \pi^\star.
\]
Using the Lipschitz continuity of $T$ (Proposition~\ref{prop:wellposedness}),
\[
\hat Q_{t_n v_0} = T(Q_{t_n v_0})
\xrightarrow[n\to\infty]{W_1} T(\pi^\star)=:\hat \pi^\star.
\]
Thus
\[
(Q_{t_n v_0},\hat Q_{t_n v_0})
\to
(\pi^\star,\hat \pi^\star )
\quad\text{in }W_1\times W_1.
\]
By joint lower semicontinuity of relative entropy,
\[
H(\pi^\star\,\|\,\hat \pi^\star )
+
H(\hat \pi^\star \,\|\,\pi^\star)
\;\le\;
\liminf_{n\to\infty} f_n(v_0) = 0.
\]
Each term is nonnegative, hence both vanish:  
\[
H(\pi^\star\,\|\,\hat \pi^\star )
=
H(\hat \pi^\star \,\|\,\pi^\star)
=
0,
\]
and therefore
\[
\hat \pi^\star=\pi^\star.
\]
Finally, the equivalence with the first-order optimality system \eqref{eq:pi_star_first_order_restate} follows directly from Proposition~\ref{prop:inner-problem}.  
\end{proof}

We now combine the above two lemmas to establish the exponential convergence claimed in Theorem~\ref{thm:exp-conv}.

\begin{proof}[Proof of Theorem \ref{thm:exp-conv}]
We finally show that the exponential decay of the energy controls the convergence of $(P_t)_{t\ge0}$ towards $\pi^\star$ in relative entropy. 
By Lemma \ref{lem:fixed-point}, the minimizer $\pi^\star$ is a fixed point of the entropic
best-response map.  By Proposition~\ref{prop:inner-problem}, this is
equivalent to the first-order optimality system for $V$: there exist
measurable potentials $f^\star,g^\star$ and a constant $c^\star\in\mathbb R$
such that
\begin{equation}
\label{eq:pi_star_first_order}
\frac{\delta F}{\delta m}(\pi^\star,x,y)
+\log\!\Bigl(\tfrac{d\pi^\star}{dR}(x,y)\Bigr)
+f^\star(x)+g^\star(y) = c^\star
\quad\text{for $R$-a.e.\ }(x,y),
\end{equation}
together with the marginal constraints $\pi^\star\in\Pi(\mu,\nu)$.

By Lemma~\ref{lem:fixed-point}, $\pi^\star$ is a fixed point of the inner map and satisfies the first-order condition \eqref{eq:pi_star_first_order}, so we obtain for every $t\ge0$:
\begin{equation}\label{eq:H-Pt-pistar-simple}
\begin{aligned}
H(P_t\,\|\,\pi^\star)
&= H(P_t\,\|\,R)-H(\pi^\star\,\|\,R)
   -\int \log\!\Bigl(\tfrac{d\pi^\star}{dR}\Bigr)\,(P_t-\pi^\star) \\
&= H(P_t\,\|\,R)-H(\pi^\star\,\|\,R)
   +\int \frac{\delta F}{\delta m}(\pi^\star,x,y)\,(P_t-\pi^\star)(dx,dy),
\end{aligned}
\end{equation}
where, in the last equality, the contributions of $c^\star,f^\star,g^\star$  vanish upon integration against $P_t-\pi^\star$ thanks to equal mass and fixed marginals. By convexity of $F$ at $\pi^\star$,
\[
F(P_t)-F(\pi^\star)
\;\ge\;
\int \frac{\delta F}{\delta m}(\pi^\star,x,y)\,(P_t-\pi^\star)(dx,dy),
\]
so from \eqref{eq:H-Pt-pistar-simple} we deduce
\begin{equation}
\label{eq:H-V-gap}
\begin{aligned}
    H(P_t\,\|\,\pi^\star)
    &\le \bigl[H(P_t\,\|\,R)+F(P_t)\bigr] -\bigl[H(\pi^\star\,\|\,R)+F(\pi^\star)\bigr] \\
    &= V(P_t)-V(\pi^\star), \qquad t\ge0.
\end{aligned}
\end{equation}
Together with the exponential decay $V(P_t)-V(\pi^\star)\le e^{-t}\bigl(V(P_0)-V(\pi^\star)\bigr)$ from Theorem \ref{thm:EDI}, this yields
\[
H(P_t\,\|\,\pi^\star) \le e^{-t}\bigl(V(P_0)-V(\pi^\star)\bigr), \qquad t\ge0.
\]
In particular, $H(P_t\,\|\,\pi^\star)\to0$ as $t\to\infty$. By Pinsker's inequality,
\[
\|P_t-\pi^\star\|_{\mathrm{TV}}
\;\le\;
\sqrt{2\,H(P_t\,\|\,\pi^\star)}
\;\xrightarrow[t\to\infty]{} 0,
\]
and therefore $P_t$ converges to $\pi^\star$ in total variation and hence in
the weak topology:
\[
\lim_{t\to\infty} H(P_t\,\|\,\pi^\star)=0,
\qquad
\lim_{t\to\infty}\|P_t-\pi^\star\|_{\mathrm{TV}}=0,
\qquad
P_t \Rightarrow \pi^\star\ \text{as }t\to\infty.
\]
\end{proof}

\subsubsection*{Funding}
A.K.'s research is supported by PEPR PDE-AI project. Z.\,R's research is supported by
the Finance For Energy Market Research Centre,  the France 2030 grant (ANR-21-EXES-0003), and PEPR PDE-AI project.
Y.Z's research is supported by 
 CNRS-Imperial ``Abraham de Moivre" International Research Lab in Mathematics.
 
\bibliography{sn-bibliography}

\end{document}